\documentclass [] {article} []

\usepackage[pdftex]{graphicx}
\usepackage{amsmath}
\usepackage{amssymb}
\usepackage{amsfonts}
\usepackage{wrapfig}
\usepackage{authblk}
\usepackage{fullpage}
\usepackage{tikz}
\usetikzlibrary{snakes}
\newcommand{\aut}{\textnormal{Aut}}

\newcommand{\cfpo}[1] {\mathrm{CFPO}_{#1}}
\newcommand{\alt}[1] {\mathrm{Alt}_{#1}}

\makeatletter
\providecommand*{\cupdot}{%
  \mathbin{%
    \mathpalette\@cupdot{}%
  }%
}
\newcommand*{\@cupdot}[2]{%
  \ooalign{%
    $\m@th#1\cup$\cr
    \sbox0{$#1\cup$}%
    \dimen@=\ht0 %
    \sbox0{$\m@th#1\cdot$}%
    \advance\dimen@ by -\ht0 %
    \dimen@=.5\dimen@
    \hidewidth\raise\dimen@\box0\hidewidth
  }%
}

\providecommand*{\bigcupdot}{%
  \mathop{%
    \vphantom{\bigcup}%
    \mathpalette\@bigcupdot{}%
  }%
}
\newcommand*{\@bigcupdot}[2]{%
  \ooalign{%
    $\m@th#1\bigcup$\cr
    \sbox0{$#1\bigcup$}%
    \dimen@=\ht0 %
    \advance\dimen@ by -\dp0 %
    \sbox0{\scalebox{2}{$\m@th#1\cdot$}}%
    \advance\dimen@ by -\ht0 %
    \dimen@=.5\dimen@
    \hidewidth\raise\dimen@\box0\hidewidth
  }%
}
\makeatother


\newcounter{alphabetical}
\newcounter{alphappendix}


\newenvironment{proof}{\noindent\textbf{Proof}\\}{\noindent$\Box$\\}

\newtheorem{lemma}{Lemma}[section]
\newtheorem{theorem}[lemma]{Theorem}
\newtheorem{cor}[lemma]{Corollary}

\newtheorem{dfn}[lemma]{Definition}
\newtheorem{definition}[lemma]{Definition}
\newtheorem{prop}[lemma]{Proposition}

\newtheorem{facts}[lemma]{Facts}

\title{The $\aleph_{0}$-categorical Trees and Cycle-free Partial Orders}
\author{Robert Barham \\ Institut f\"ur Algebra, TU Dresden \\ robert.barham@yahoo.co.uk}

\begin{document}

\maketitle
\thanks{The author has received funding from the European Research Council under the European Community's Seventh Framework Programme (FP7/2007-2013 Grant Agreement no. 257039).}

\section*{Abstract}
We provide a description of the structure of $\aleph_0$-categorical trees and cycle-free partial orders.  First the maximal branches of $\aleph_0$-categorical tree are examined, followed by the configuration of the ramification orders, which are then combined to provided necessary and sufficient conditions for a tree to be $\aleph_0$-categorical in terms of these two things.  The classification of the $\aleph_0$-categorical cycle-free partial orders is found as a corollary.

\section{Introduction}

A countable structure is said to be $\aleph_0$-categorical (also known as countably categorical, or $\omega$-categorical) if it is the only countable model of its first order theory.  The Ryll-Nardzewski Theorem shows that this is equivalent to the automorphism group of the structure being oligomorphic.  This property is important in so many ways other than its intrinsic interest that it would be futile to give a good account of its importance here.

A tree, sometimes called a semi-linear order, is a partial order where initial sections are linear, but we can `split' as we go up the order.  A formal definition is given in Definition \ref{dfn:trees}.  A partial answer to the natural question `Which trees are $\aleph_{0}$-categorical?' can be found in Manfred Droste's memoir on transitive partially ordered sets \cite{Droste1985} which shows that there are no 4-transitive trees, no 3-homogeneous trees and classifies the 2- or 3- transitive and 2-homogeneous countable trees.  While all the 2 or 3-transitive trees are $\aleph_{0}$-categorical, this list is far from exhaustive.  For example, adding a root to any 2-transitive tree will result in something not 2-transitive, yet still $\aleph_{0}$-categorical.

Other studies of the automorphism groups of trees include Chicot's thesis, where she classifies the 1-transitive trees \cite{Chicot2004}, and the work of Droste, Holland and Macpherson, where the properties of these groups are studied in great detail \cite{DHM1989no1}, \cite{DHM1989no2}, \cite{DHM1989no3}.

An extremely elegant description of the $\aleph_{0}$-categorical linear orders was published by Joseph Rosenstein in 1969, which can be found in either \cite{Rosenstein1969} or \cite{Rosenstein1982}, and this result is instrumental for the work of this paper.

Since the proof of the main result of this paper adds colour predicates to the language of trees to ensure that some of the definable structure of a tree is respected by certain non-definable substructures, extending the result to the coloured trees is an easy corollary.  The $\aleph_0$-categorical coloured linear orders were classified by  Mwesigye and Truss in \cite{MwesigyeTruss2010}, who extended and made good use of one of Rosenstein's Propositions, namely Proposition 8.35 of \cite{Rosenstein1982}.  This lemma shows that in linear orders the $n$-orbits are determined by the 2-orbits of adjacent pairs by patching automorphisms together, and this method is at the heart of this paper.

Cycle-free partial orders (CFPOs) are a generalisation of trees, where you are allowed to branch as you move down the order, as well as up.  They were first proposed as objects of study by Rubin in \cite{Rubin91}.  The answer to various transitivity questions about CFPOs can be found in \cite{Warren1997}.

I would like to thank my Ph.D. supervisor, John Truss, for his extremely valuable advice and kind support.

\begin{dfn}\label{dfn:trees}
A tree is a partial order that satisfies the two additional axioms:
\begin{itemize}
\item $\forall x , y, z (x,y \leq z \rightarrow (x \leq y \:\mathrm{or}\: y \leq x))$
\item $\forall x, y \exists z (z \leq x,y)$
\end{itemize}
\end{dfn}

\begin{dfn}
A $\lambda$-coloured tree is a structure $\langle T, \leq, C_i \: : \: i \leq \lambda \rangle$ such that $\langle T, \leq \rangle$ is a tree, while the $C_i$ are mutually exclusive unary predicates.
\end{dfn}

\begin{definition}
If $x,y$ are elements of a partial order $T$ then the \textbf{meet} of $x$ and $y$ is written and defined as:
$$x \wedge y := \mathrm{sup} \lbrace t \in T \: : \: t \leq x,y \rbrace $$
\end{definition}

Note that $x \wedge y$ might not be an element of $T$, but in Section 2 we will show that there are always extensions of the tree in which these points exist.

\begin{definition}
A \textbf{cone} above a point $t$ is a maximal set $C$ such that
$$\forall c \in C \; t <c \quad \mathrm{and} \quad \forall c_0, c_1 \in C \; c_0 \wedge c_1 > t$$
Essentially the points strictly above $t$ should form a collection of trees, and a cone is one of the trees in this collection.

The \textbf{ramification order} of a point $t$ is the number of cones above $t$.
\end{definition}

\begin{definition}\label{def:irrational}
A tree $T$ is said to be \textbf{ramification complete} if it contains the meet of any two points, i.e. $x \wedge y \in T$ for every $x,y \in T$.  

The \textbf{ramification completion} of a tree $T$ is the intersection of all ramification complete trees $S$ such that $T \subseteq S$ and is written as $T^+$.

The elements of $T^+ \setminus T$ are called \textbf{irrational}.
\end{definition}

The ramification completion of a countable tree is always countable.

\begin{dfn}\label{definition:almosttransitive}
Let $T$ be a tree.  The \textbf{$n$-orbits} of $T$ are the following sets
$$\lbrace \phi(\bar{x}) \in T^n \: : \: \phi \in \aut(T) \rbrace$$
where $\bar{x} \in T^n$.  A tree is said to be \textbf{almost $n$-transitive} if it has only finitely many $n$-orbits.
\end{dfn}

\begin{theorem}
A tree $T$ is $\aleph_{0}$-categorical if and only if it is almost $n$-transitive for all $n$.
\end{theorem}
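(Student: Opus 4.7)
The plan is simply to quote the Ryll-Nardzewski theorem, since after unravelling definitions the statement is a direct specialisation of it to trees. Recall that Ryll-Nardzewski characterises the $\aleph_{0}$-categorical countable first-order structures as precisely those $\mathcal{M}$ whose automorphism group acts oligomorphically: for every $n \in \omega$ the set of orbits of $\aut(\mathcal{M})$ on $\mathcal{M}^n$ is finite. Since a tree, in the sense of Definition \ref{dfn:trees}, is in particular a first-order structure, this theorem applies verbatim.

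The only task is therefore to match the vocabulary used above with the vocabulary of Ryll-Nardzewski. The coordinatewise action of $\aut(T)$ on $T^{n}$ has as its orbits precisely the sets $\lbrace \phi(\bar x) : \phi \in \aut(T) \rbrace$ for $\bar x \in T^n$, i.e. the $n$-orbits of Definition \ref{definition:almosttransitive}. The definition of almost $n$-transitivity is that this set of orbits is finite, so ``almost $n$-transitive for all $n$'' is literally the oligomorphicity condition appearing in Ryll-Nardzewski. The two implications of the biconditional then follow, one in each direction, from the corresponding two implications of that theorem.

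No genuine obstacle arises. The only remark to make is that $\aleph_{0}$-categoricity presupposes that $T$ is countable, which is the standing assumption throughout the paper, so no separate cardinality argument is needed. In particular, the proof does not use any features of the tree axioms, which is expected: the tree structure will play its real role later, when almost $n$-transitivity is itself analysed in order to describe exactly which trees satisfy it.
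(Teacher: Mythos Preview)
Your proposal is correct and matches the paper's own treatment: the paper simply observes that the statement is a reformulation of the Ryll-Nardzewski theorem and cites Hodges for a proof. Your unpacking of ``almost $n$-transitive for all $n$'' as oligomorphicity is exactly the identification the paper has in mind.
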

This is a reformulation of the Ryll-Nardzewski Theorem.  A proof for this context can be found in \cite{Hodges1993}, Theorem 7.3.1.

The next few lemmas and definitions allow us to reduce to the case $n=2$ when considering almost $n$-transitivity.

\begin{dfn}
The \textbf{completion of an $n$-tuple $p$} is a tuple of least length which contains $p$ and is closed under $\wedge$.

A \textbf{complete $n$-orbit} of $T$ is the orbit of some complete $n$-tuple.

$T$ is said to be \textbf{almost $n$-complete transitive} if it has finitely many complete $n$-orbits.
\end{dfn}

\begin{lemma}\label{lemma:complete}
A complete tree $T$ is almost $n$-complete transitive for every $n \geq 2$ if and only if $T$ is almost $m$-transitive for each $m \geq 2$.
\end{lemma}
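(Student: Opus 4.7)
The plan is to prove both implications via a canonical, $\aut(T)$-equivariant completion map $p \mapsto \hat{p}$. Since $T$ is ramification complete, the meets $p_i \wedge p_j$ of coordinates of any $n$-tuple $p \in T^n$ lie in $T$, so the completion $\hat{p}$ exists inside $T$. Its length is bounded by $2n-1$, since the meet-closure of $n$ points in a tree forms a finite sub-tree with at most $n-1$ internal nodes. To turn this into a well-defined map on ordered tuples, I would fix a canonical ordering of $\hat{p}$: list $p_1,\dots,p_n$ first and then the new meets in lexicographic order of the index pairs they come from. Since automorphisms preserve the partial order, and since $\wedge$ is definable from $\leq$, the map $p \mapsto \hat{p}$ is equivariant with respect to $\aut(T)$.

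For the $(\Leftarrow)$ direction, assume $T$ is almost $m$-transitive for every $m \geq 2$. Every complete $n$-tuple is a $k$-tuple for some $k$ with $n \leq k \leq 2n-1$, and each $T^k$ has finitely many orbits by hypothesis, so the total number of complete $n$-orbits is finite.

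For the $(\Rightarrow)$ direction, assume $T$ is almost $n$-complete transitive for every $n \geq 2$, and fix $m \geq 2$. The equivariant completion map descends to a well-defined map from $m$-orbits to complete $m$-orbits. This induced map is injective, since $p$ is recovered as the first $m$ coordinates of $\hat{p}$. Hence the finite number of complete $m$-orbits bounds the number of $m$-orbits of $T$, giving almost $m$-transitivity.

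The one point requiring care is the canonical ordering of $\hat{p}$: it must be defined in such a way that it depends only on the $\aut(T)$-orbit of $p$, so that the map really is equivariant. This is the main technical wrinkle, but it reduces to the observation that $\wedge$ is definable from the order, so any automorphism commutes with the meet operation; once a canonical indexing scheme is in place, equivariance is automatic.
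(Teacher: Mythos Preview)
Your proof is correct and rests on the same underlying idea as the paper's: the $\aut(T)$-equivariant passage $p \mapsto \hat{p}$ from an $n$-tuple to its completion. The $(\Leftarrow)$ direction matches the paper exactly. For $(\Rightarrow)$, the paper takes a slightly shorter path: rather than fixing a canonical ordering of $\hat{p}$ to make the completion map injective on tuples, it simply observes that the map is finite-to-one, since any $n$-tuple completing to a given complete tuple must occur as a sub-tuple of it. This sidesteps the bookkeeping you flag at the end --- in particular the wrinkle that distinct index pairs $(i,j)$ may yield the same meet, so your lex-ordering scheme needs a small adjustment (e.g.\ index each new meet by the lex-least pair producing it) to avoid duplicates and keep $\hat{p}$ of least length. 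Your route buys the sharper conclusion that $m$-orbits actually inject into complete $m$-orbits, at the cost of setting up the canonical ordering; the paper's route buys brevity by settling for a finite bound. Both are valid and essentially equivalent in difficulty.
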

\begin{proof}
If $T$ is almost $m$-transitive for each $m \geq 2$ then it is automatically almost $n$-complete transitive for every $n \geq 2$.  In the other direction, note that for each $n$-complete tuple there are only finitely many tuples whose completion is that tuple.
\end{proof}

The following theorem is a variant of Proposition 4.5 in Simon's paper, \cite{Simon2011}.

\begin{theorem}\label{Thm:2impliesn}
Let $T$ be a tree with $T=T^+$.  If $T$ is almost 2-transitive then $T$ is almost $n$-transitive for each $n \geq 2$.
\end{theorem}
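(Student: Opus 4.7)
The plan is to proceed by induction on $n$, working with complete tuples via Lemma~\ref{lemma:complete}. Since $T = T^+$, every $n$-tuple has its completion inside $T$, so the meet-closure of a complete tuple $\bar{a}$ is a finite meet-subtree $S(\bar{a}) \subseteq T$. I will show that for each $n$ there are finitely many complete $n$-orbits. The base case $n=2$ is precisely almost $2$-transitivity (the completion of a pair adds at most one meet, and by the remark in the proof of Lemma~\ref{lemma:complete} this loses only a finite factor).

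For the inductive step, observe first that the abstract isomorphism type of the finite meet-closed subtree $S(\bar{a})$ (together with the labelling telling which nodes belong to $\bar{a}$ and which are auxiliary meets) takes only finitely many values, so I may fix one such labelled shape $\Sigma$ and count orbits of complete tuples $\bar{a}$ with $S(\bar{a}) \cong \Sigma$. Choose a maximal element $a$ of $\Sigma$, and let $\bar{a}'$ denote the complete $n$-tuple obtained by deleting $a$ and, if it is now redundant, its unique meet-parent in $\Sigma$. By the inductive hypothesis, there are finitely many orbits of $\bar{a}'$; fix one and a representative. It remains to show that, with $\bar{a}'$ held fixed, the attachment point $a$ determines only finitely many orbits over $\bar{a}'$.

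The point $a$ sits above some node $t$ of $S(\bar{a}')$ and lies in a cone $C$ at $t$. By almost $2$-transitivity applied to the pair $(t, a)$, the $\aut(T)$-orbit of $(t,a)$ takes only finitely many values; what I actually need, however, is the orbit of $a$ under the pointwise stabiliser $\aut(T)_{\bar{a}'}$. The key technical step is to show that for a fixed complete tuple $\bar{a}'$ and a fixed cone $C$ above some $t \in S(\bar{a}')$ which contains none of the elements of $\bar{a}'$ strictly above $t$, any automorphism of $T$ fixing $t$ and mapping within $C$ can be extended to an automorphism fixing all of $\bar{a}'$ by acting as the identity on the other cones at $t$ and on all other parts of the tree. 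This patching step uses ramification completeness in an essential way: the decomposition of $T$ into the initial segment $\{s : s \leq t\}$ together with the disjoint family of cones at $t$ is a genuine decomposition in $T$, so automorphisms can be defined cone-by-cone and glued. Granted the patching, the orbit of $a$ under $\aut(T)_{\bar{a}'}$ refines the $\aut(T)$-orbit of $(t,a)$ by at most a bounded factor (depending on which cone of $t$ contains $a$, which is a finite choice), giving only finitely many possibilities.

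The main obstacle is this patching lemma: from almost $2$-transitivity, which is a statement about $\aut(T)$, one must extract information about the much smaller stabiliser $\aut(T)_{\bar{a}'}$. The right viewpoint is to treat each cone at a node of $S(\bar{a}')$ as an invariant subtree, and to observe that almost $2$-transitivity of $T$ forces a strong homogeneity of each such cone relative to its root, so that the $2$-type of $(t,a)$ controls the orbit of $a$ in $C$ under automorphisms fixing $t$. Assembling these local orbit counts across the finitely many cones of $S(\bar{a}')$ and across the finitely many shapes $\Sigma$ gives a finite total count of complete $(n+1)$-orbits, completing the induction.
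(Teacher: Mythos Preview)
The paper does not supply a proof of this theorem; it is stated as a variant of Proposition~4.5 in \cite{Simon2011} and left at that. Your overall plan---induct on the size of a meet-closed tuple, remove a maximal element $a$, and patch automorphisms cone-by-cone to show that the $2$-orbit information determines the orbit of $a$ over the stabiliser of the rest---is the standard approach and is sound in outline.

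There is, however, a concrete gap in your inductive step when you delete the meet-parent $p$ of $a$ ``if it is now redundant''. Suppose $p$ has exactly two children in $S(\bar a)$, namely $a$ and some $b$, so that after deleting $a$ and $p$ the highest node of $S(\bar a')$ below $a$ is some $t<p$. Then the cone $C$ of $t$ containing $a$ also contains $b$ (because $a\wedge b=p>t$), so $C$ is \emph{not} disjoint from $\bar a'$ and your patching lemma does not apply as stated. In this situation the $2$-orbit of $(t,a)$ alone does not pin down the $\aut(T)_{\bar a'}$-orbit of $a$: two candidates for $a$ in the same orbit over $t$ can meet $b$ at different points on the chain from $t$ to $b$, and any automorphism fixing $b$ must respect that meet. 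The clean fix is simply not to delete $p$. Removing a maximal element from a meet-closed tuple always yields a meet-closed tuple (any meet of the survivors is bounded above by a survivor, hence cannot equal the removed maximal element), so you can induct directly on meet-closed $m$-tuples rather than on minimal completions. With $p\in\bar a'$, the cone of $p$ containing $a$ really is free of $\bar a'$, and the patching (restrict a witnessing automorphism to that cone and extend by the identity elsewhere, swapping two free cones if necessary) goes through without further difficulty.
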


\section{Linear Orders and Maximal Chains}

Since trees are built up from linear orders, this section will deal with the properties of linear orders and what kinds of linear orders can occur in an $\aleph_{0}$-categorical tree.

\begin{dfn}
If $\langle L_0 , <_0 \rangle$ and $\langle L_1 , <_1 \rangle$ are linear orders then their \textbf{concatenation}, denoted by $L_0 \,^{\wedge} L_1$ is the linear order $\langle L_0 \cup L_1 , < \rangle$, where
$$x< y \quad \mathrm{iff} \quad \left\lbrace
\begin{array}{r c c r}
(x,y \in L_0 & \mathrm{and} & x <_0 y) & \quad\mathrm{or} \\
(x,y \in L_1 & \mathrm{and} & x <_1 y) & \quad\mathrm{or} \\
(x \in L_0 & \mathrm{and} & y \in L_1)
\end{array} \right. $$
\end{dfn}

\begin{dfn}
$\langle \mathbb{Q}_n, <_{\mathbb{Q}_n}, C_1 \ldots C_n \rangle$ a countable dense linear order where the colours occur interdensely, i.e. for all $x$ and $y$ there are $z_1, \ldots z_n$ between $x$ and $y$ such that $C_i(z_i)$ holds for each $i$.
\end{dfn}

$\mathbb{Q}_n$ is the Fra\"iss\'e limit of $n$-coloured linear orders, and hence is $\aleph_0$-categorical.

\begin{dfn}
Let $\langle L_1 , <_1 \rangle, \ldots, \langle L_n , <_n \rangle$ be linear orders.  For each $q \in \mathbb{Q}_n$ we define $L(q)$ to be a copy of $\langle L_i , <_i \rangle$  	if $C_i(q)$.  The $\mathbb{Q}_n$-\textbf{shuffle} of  $$\langle L_1 , <_1 \rangle, \ldots, \langle L_n , <_n \rangle$$ denoted by $\mathbb{Q}_n(L_1, \ldots L_n)$, is the linear order $\langle \bigcup_{q \in \mathbb{Q}_n} L(q), < \rangle $, where
$$x< y \quad \mathrm{iff} \quad \left\lbrace
\begin{array}{c c c c}
x,y \in L(q) & \mathrm{and} & x <_i y & \mathrm{or}\\
x \in L(q) \, , \, y \in L(p) & \mathrm{and} & q <_{\mathbb{Q}_n} p
\end{array} \right. $$
\end{dfn}

\begin{theorem} [Rosenstein \cite{Rosenstein1969}, \cite{Rosenstein1982}] \label{theorem:rosenstein}
If $L$ is an $\aleph_{0}$-categorical linear order then $L$ is built up from singletons by a finite number of concatenations or shuffles.
\end{theorem}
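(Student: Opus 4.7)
The plan is to proceed by induction on the number of $1$-orbits of $L$ under $\aut(L)$, which is finite by the Ryll-Nardzewski characterization of $\aleph_{0}$-categoricity. The base case is when $L$ has a single $1$-orbit; the classical analysis of $1$-transitive $\aleph_{0}$-categorical linear orders gives that $L$ is either a singleton or $\mathbb{Q}$, and both are obtained from singletons (the latter as the one-colour shuffle $\mathbb{Q}_1(*)$). For the inductive step, I aim to produce a decomposition of $L$ as either a finite concatenation or a single $\mathbb{Q}_n$-shuffle of proper substructures, each of which inherits $\aleph_{0}$-categoricity (after adding finitely many colour predicates) while having strictly fewer $1$-orbits than $L$.

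The workhorse is a convex $\aut(L)$-invariant equivalence relation $\sim$ on $L$, chosen so that its classes are proper non-trivial substructures and the quotient $L/\sim$ admits no non-trivial proper convex invariant equivalence relation. Automorphisms of $L$ descend to automorphisms of $L/\sim$, and oligomorphy is preserved, so $L/\sim$ is again $\aleph_{0}$-categorical. This reduces the problem to a dichotomy on $L/\sim$. If $L/\sim$ is finite, with classes $C_1 < \cdots < C_m$, then $L = C_1 \,^{\wedge} \cdots \,^{\wedge} C_m$ is a concatenation; each $C_i$, viewed with induced colour predicates from $L$, is $\aleph_{0}$-categorical with strictly fewer $1$-orbits (the colouring is a proper refinement of the restricted orbit structure), so the induction applies. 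If $L/\sim$ is infinite, then the absence of non-trivial invariant convex equivalences combined with oligomorphy forces $L/\sim$ to be a dense linear order without endpoints whose $1$-orbits $T_1, \ldots, T_n$ occur interdensely; hence $L/\sim \cong \mathbb{Q}_n$. Picking representative classes $L_i$ from each $T_i$ yields $L \cong \mathbb{Q}_n(L_1, \ldots, L_n)$, with each $L_i$ again of strictly smaller complexity.

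The main obstacle is the precise choice of $\sim$ that simultaneously produces strict decrease of the induction measure on the classes \emph{and} a quotient that is either finite or of $\mathbb{Q}_n$-type. Naive candidates (such as ``$[x,y]$ is finite'', or ``$x$ and $y$ lie in the same $1$-orbit'') can produce classes that are not themselves $\aleph_{0}$-categorical, so the construction may need to be iterated, or defined as a fixed point of a condensation operator, so that classes are as large as possible without destroying $\aleph_{0}$-categoricity of the coloured pieces. Once the right $\sim$ is in hand, verifying that both $L/\sim$ and the representative classes are $\aleph_{0}$-categorical as coloured structures proceeds by standard descent arguments for oligomorphic actions, and the dichotomy-plus-induction closes the argument.
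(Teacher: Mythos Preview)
The paper does not give its own proof of this theorem; it is stated with a citation to Rosenstein and then used as input for the rest of the paper, so there is no in-paper argument to compare against.

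Your strategy is the right one in spirit (and is close to Rosenstein's original approach), but the induction measure is the weak point. You induct on the number of $1$-orbits of $L$ and assert that in the shuffle case each representative class $L_i$ has strictly fewer $1$-orbits than $L$. This can fail. Take $L=\mathbb{Q}_1(2\,^\wedge\,\mathbb{Q})$: the three $1$-orbits of $L$ are ``has an immediate successor'', ``has an immediate predecessor'', and ``has neither'', and the piece $2\,^\wedge\,\mathbb{Q}$ has exactly the same three $1$-orbits. Your criterion for $\sim$ (proper non-trivial convex classes with primitive quotient) selects precisely ``same copy of $2\,^\wedge\,\mathbb{Q}$'' here, since any strictly coarser invariant convex relation is already the trivial one; so the induction stalls on an infinite piece with no drop in the measure. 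Ironically, the ``na\"ive'' candidate you dismiss, $x\sim y$ iff $[x,y]$ is finite, behaves better in this example: in an $\aleph_0$-categorical order its classes have bounded size (otherwise there would be infinitely many $1$-types), hence are finite, and one can then analyse the dense coloured quotient directly. Rosenstein's actual proof handles exactly this termination issue via a specific choice of condensation and a different well-founded measure; your proposal flags the difficulty in its final paragraph but does not close it.
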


This result was extended to the coloured linear orders by Mwesigye and Truss in the following theorem.

\begin{theorem}[Mwesigye, Truss \cite{MwesigyeTruss2010}]
A finite or countable coloured linear order $(A,\leq,C_0, \ldots)$ is $\aleph_{0}$-categorical if and only if it can be built up in finitely many steps from coloured singletons using concatenations or shuffles.
\end{theorem}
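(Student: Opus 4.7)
For the \emph{if} direction I would induct on the number of construction steps, using Ryll-Nardzewski. A coloured singleton is trivially $\aleph_0$-categorical. For $L_0 \,^{\wedge} L_1$, the subgroup $\aut(L_0) \times \aut(L_1) \leq \aut(L_0 \,^{\wedge} L_1)$ acts on $m$-tuples with finitely many orbits, since every tuple splits uniquely into parts from $L_0$ and $L_1$ and each part has finitely many orbits under $\aut(L_j)$ by hypothesis. For $\mathbb{Q}_n(L_1,\ldots,L_n)$, the subgroup generated by copy-preserving automorphisms acting as $\aut(L_i)$ inside each $L_i$-copy, together with the lift of $\aut(\mathbb{Q}_n)$ permuting same-coloured copies by order-isomorphism, is oligomorphic: an $m$-tuple's orbit is determined by the $\aut(\mathbb{Q}_n)$-orbit of its pattern of occupied copies (finite by oligomorphy of $\mathbb{Q}_n$) together with the $\aut(L_i)$-orbits of its restrictions to each occupied copy (finite by hypothesis). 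In both cases the full automorphism group has no more orbits than these subgroups and so is oligomorphic.

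For the \emph{only if} direction I would adapt Rosenstein's decomposition argument, inducting on a well-founded complexity measure on $\aleph_0$-categorical coloured linear orders --- for instance the height of the longest chain of $\aut(L)$-invariant convex equivalence relations on $L$, which is finite because $\aleph_0$-categoricity yields a finite type space. Given such an $L$ the following dichotomy applies. If $L$ has a proper $\aut(L)$-invariant convex subset, then $L$ splits as a concatenation $L_0 \,^{\wedge} L_1$; each $L_i$ is $\aleph_0$-categorical (the setwise stabiliser in $\aut(L)$ restricts to an oligomorphic group on each side) and strictly simpler, so the inductive hypothesis applies to each. If $L$ has no such invariant subset, then I would exhibit an $\aut(L)$-invariant convex equivalence relation $\sim$ for which $L^* := L/{\sim} \cong \mathbb{Q}_k$ with $k$ the number of $1$-types of $L^*$, yielding $L \cong \mathbb{Q}_k(M_1, \ldots, M_k)$ where each $M_j$ is a representative of the $\sim$-classes whose elements realise the $j$-th $1$-type. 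If $\sim$ is the identity the $M_j$ are coloured singletons and the induction terminates; otherwise each $M_j$ is $\aleph_0$-categorical and strictly simpler, and the inductive hypothesis applies.

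The main obstacle is the shuffle case: producing $\sim$ and verifying $L^* \cong \mathbb{Q}_k$. A natural candidate for $\sim$ is the finest $\aut(L)$-invariant convex equivalence whose classes are themselves $\aleph_0$-categorical of strictly smaller complexity; one must verify either that this is genuinely non-trivial, or else that $L$ itself is already of the form $\mathbb{Q}_k$. The Fra\"iss\'e back-and-forth giving $L^* \cong \mathbb{Q}_k$ requires interdensity of the $k$ colours in $L^*$, which follows from the absence of $\aut(L)$-invariant cuts in $L$ (any interval of $L^*$ omitting some colour would pull back to an invariant convex subset of $L$). Correctly tracking the colours through Rosenstein's original argument is the additional work carried out by Mwesigye and Truss.
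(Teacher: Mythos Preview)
The paper does not contain its own proof of this theorem: it is stated with attribution to Mwesigye and Truss \cite{MwesigyeTruss2010} and is used as a black box, so there is nothing in the paper to compare your argument against.

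As to the argument itself: your \emph{if} direction is standard and correct. Your \emph{only if} direction has the right architecture --- invariant convex subsets force a concatenation split, and in their absence one seeks an invariant convex congruence whose quotient is a coloured dense order --- and this is indeed how Rosenstein's proof is adapted. However, the specific complexity measure you propose (height of the lattice of invariant convex equivalences) is not obviously strictly decreasing under concatenation splits: an invariant convex equivalence on a factor $L_i$ need not come from one on $L$, so the chain in $L_i$ could in principle be as long. The usual fix, and the one Mwesigye and Truss use following Rosenstein, is to induct instead on the number of $1$-types (or $2$-types) and to take $\sim$ to be the relation ``the closed interval between $a$ and $b$ realises only finitely many $1$-types'' or an iterate thereof; this gives a cleaner decrease and makes the verification that $L/{\sim}\cong\mathbb{Q}_k$ routine. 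You flag exactly this step as the main obstacle, which is accurate; your candidate for $\sim$ (``finest invariant convex equivalence with strictly simpler classes'') is circular as stated, since ``simpler'' refers to the very measure whose well-foundedness is in question.
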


Rosenstein's theorem leads to a natural method of describing the countably categorical linear orders.

\begin{dfn}
A \textbf{term} is built as follows:

\begin{tabular}{r l}
\textbf{Singleton} & The singleton 1 is a term. \\
\textbf{Concatenation} & If $t_0, t_1$ are terms then $t_0 \, ^\wedge \, t_1$ is a term. \\
\textbf{$\mathbb{Q}_n$-shuffle} & If $t_0, \ldots t_{n-1}$ are terms then $\mathbb{Q}_n (t_0, \ldots, t_{n-1})$ is a term.
\end{tabular}

$\mathbb{Q}_n$-shuffle is allowed for all $n \in \mathbb{N}$.  A \textbf{finite term} is a term that represents a finite linear order.  Similarly, an \textbf{infinite term} is one that represents an infinite linear order.  \end{dfn}

The terms correspond to linear orders in the obvious way, and I will not be particularly careful about distinguishing the two.  That every $\aleph_{0}$-categorical linear order is represented by a term is Theorem \ref{theorem:rosenstein}, however it is possible for a linear order to have many different representations.

\begin{facts}\label{lemma:nfrep} \label{lemma:nfperm} \label{lemma:nfnest} \label{lemma:nfconc}
Let $t_0, \ldots, t_{n-1}$ be terms, let $m \leq n$ and let $f$ be a permutation of $n$.  We also let $\tau$ be either the empty set or one of the $t_i$. Then the following are isomorphic to $\mathbb{Q}_n(t_0, t_1, \ldots t_{n-1})$:
\begin{enumerate}
\item $\mathbb{Q}_n(t_{f(0)}, t_{f(1)}, \ldots t_{f(n-1)})$;
\item $\mathbb{Q}_{n+1} (t_0, \ldots, t_{n-1},t_m)$;
\item $\mathbb{Q}_{m+1}(t_0, \ldots t_{m-1}, \mathbb{Q}_n(t_0, \ldots t_{n-1}))$; and
\item $\mathbb{Q}_{m}(t_0, \ldots ,t_{m-1}) ^\wedge \tau ^\wedge \mathbb{Q}_{m}(t_0, \ldots ,t_{m-1})$.
\end{enumerate}
\end{facts}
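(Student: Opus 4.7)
My plan is to deduce each of (1)--(4) from the following uniqueness fact: any countable $n$-coloured dense linear order without endpoints in which each colour is dense is isomorphic to $\mathbb{Q}_n$. This is immediate from back-and-forth, since both sides are Fra\"iss\'e limits of the class of finite $n$-coloured linear orders. Every shuffle $\mathbb{Q}_n(t_0, \ldots, t_{n-1})$ carries a natural convex equivalence $\sim$ with $x \sim y$ iff $x$ and $y$ lie in the same block $L(q)$; the quotient is $\mathbb{Q}_n$, and the shuffle is determined up to isomorphism by that quotient together with the assignment of $t_i$-blocks to the colour classes. So for each claim it suffices to exhibit a convex equivalence on the left-hand side whose quotient is a dense $n$-coloured linear order without endpoints in which each colour is dense and whose $i$-coloured classes are copies of $t_i$.

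For (1), the permutation $f$ induces an automorphism of $\mathbb{Q}_n$ permuting the colours (a standard colour-permuting back-and-forth), and this lifts to pair $t_i$-blocks on the left with $t_i$-blocks on the right. For (2), use the natural equivalence on $\mathbb{Q}_{n+1}(t_0, \ldots, t_{n-1}, t_m)$ and then merge colours $C_m$ and $C_n$ in the quotient; merging two dense colours preserves density, so the merged quotient is $\mathbb{Q}_n$ and each of its $n$ colour classes is labelled by $t_i$ as required.

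For (3), take the lowest-level convex equivalence on the nested shuffle. The quotient is a countable dense linear order without endpoints: outside the ``blown-up'' colour-$m$ regions of the outer $\mathbb{Q}_{m+1}$ it presents points of colours $0, \ldots, m-1$, and inside each blown-up region it presents a copy of $\mathbb{Q}_n$ with all $n$ colours. Colours $0, \ldots, m-1$ are dense via the outer structure, while colours $m, \ldots, n-1$ are dense because the blown-up regions are densely interspersed in $\mathbb{Q}_{m+1}$ and each colour is dense within every such region, so between any two quotient points one finds a blown-up region strictly between them and then a point of the desired colour inside.

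For (4), the lowest-level quotient is either $\mathbb{Q}_n ^\wedge \mathbb{Q}_n$ or $\mathbb{Q}_n ^\wedge \{c\} ^\wedge \mathbb{Q}_n$ (with $c$ carrying the colour of $\tau$); either is a countable dense linear order without endpoints with every colour still dense, hence isomorphic to $\mathbb{Q}_n$ by the uniqueness fact. The step I expect to require the most care is the density check in (3) for colours $m, \ldots, n-1$ when the two points straddle different blown-up regions, but this reduces to the fact that between any two outer points there is another outer point of colour $m$, which supplies an entire $\mathbb{Q}_n$-worth of the ``inner'' colours.
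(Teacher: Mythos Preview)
Your argument is correct. The paper states this result as ``Facts'' and gives no proof at all, so there is nothing to compare against; your approach via the uniqueness of $\mathbb{Q}_n$ (as the Fra\"iss\'e limit of finite $n$-coloured linear orders) together with the observation that a shuffle is determined by its coloured quotient and the block types is the standard way to verify such identities, and you carry it out cleanly.

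Two minor remarks. First, in item~(4) the statement as written uses $\mathbb{Q}_m$ rather than $\mathbb{Q}_n$; the isomorphism only holds in general when $m=n$ (otherwise the block types $t_m,\ldots,t_{n-1}$ need not appear on the left-hand side), and you have silently and correctly read it this way. Second, in item~(3) it is worth making explicit, as you essentially do, that for colours $0,\ldots,m-1$ the density in the quotient is witnessed \emph{both} by the outer singletons and by points inside the blown-up regions, so the colouring is consistent across the two sources; your density check for colours $m,\ldots,n-1$ via an intermediate outer point of colour $m$ is exactly right.
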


Using this lemma it is possible to derive a unique representation of not only $\aleph_{0}$-categorical linear orders, but also $\aleph_{0}$-categorical coloured linear orders (by allowing coloured singletons to occur in our terms) and infinite concatenations of $\aleph_{0}$-categorical linear orders.  Such representations have certain properties that facilitate a proof regarding the maximal chains of trees. 

\begin{dfn}
We use induction over the formation of terms to define when a term is in \textbf{normal form} (n.f.).
\begin{enumerate}
\item All finite terms are in n.f..
\item A term of the form $\mathbb{Q}_{m}(t_0, \ldots ,t_{m-1})$ is in n.f. if:
\begin{enumerate}
\item all the $t_i$ are in n.f.; and
\item Number 2 and 3 of Facts \ref{lemma:nfrep} do not apply.
\end{enumerate}
If the $t_i$ are permuted then the term is unaltered.
\item A term of the form $t_0 \,^\wedge \ldots \,^\wedge t_{n-1}$ is in n.f. if all the $t_i$ are in n.f. and no $t_{i-1} \,^\wedge t_i \,^\wedge t_{i+1}$ or $t_{i} \,^\wedge \emptyset \,^\wedge t_{i+1}$ satisfy Number 4 of Facts \ref{lemma:nfconc}.
\end{enumerate}
A possibly infinite sequence of terms $(s_i)$ is said to be in \textbf{normal form} if:
\begin{enumerate}
\item each $s_i$ is in normal form;
\item no $s_{i-1} \,^\wedge s_i \,^\wedge s_{i+1}$ or $s_{i} \,^\wedge \emptyset \,^\wedge s_{i+1}$ satisfy Number 4 of Facts \ref{lemma:nfconc};
\item if $s_j$ is finite either:
\begin{enumerate}
\item $s_{j+1}$ is infinite; or
\item $(s_i)$ is an infinite sequence and $s_j = s_k = 1$ for all $k \geq j$.
\end{enumerate}
\end{enumerate}
\end{dfn}

The process of showing that such representations are unique and can describe every $\aleph_{0}$-categorical linear order is long and uninformative, so we shall not provide the proof, and simply state the pertinent facts about normal form representations:

\begin{facts} \label{facts:nf}
\textcolor{white}{gap}
\begin{enumerate}
\item For every sequence of terms $(t_i)$ there is a sequence in normal form $(t_i')$ that represents the same linear order as $(t_i)$.
\item If $(t_i)$ and $(s_i)$ are both in normal form and represent the same linear order then $t_i = s_i$ for every $i$.
\item If $(t_i)$ is in normal form then all contiguous subsequences of $(t_i)$ are also in normal form (excluding the case where $(t_i)$ ends in a tail of $1$ and the contiguous subsequence contains only a part of this tail).
\end{enumerate}
\end{facts}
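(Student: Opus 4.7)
The plan is to address the three facts in order, treating existence and the subsequence claim as essentially bookkeeping and reserving uniqueness as the substantive content. Throughout I would identify each term with the linear order it represents and use the obvious complexity measure $|t|$ counting symbols (singletons, concatenation junctions and shuffle heads), extended to sequences by summing.

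For Fact (1), I would view the four normal-form violations listed in Facts 2.6 together with the merging of two adjacent finite subterms into one as a rewrite system on sequences. Each rewrite step (deleting a duplicated argument of a shuffle, collapsing a shuffle whose tail argument is a shuffle over a superset, absorbing a shuffle--token--shuffle block into a single shuffle, or splicing two adjacent finite terms) strictly decreases $|\cdot|$, so iteration terminates. The terminal sequence represents the original linear order, since each rewrite is justified by one of the isomorphisms in Facts 2.6, and by construction satisfies all the conditions to be in normal form. For Fact (3) I would observe that every condition in the definition of normal form is \textbf{local}: conditions 2(a)--(b) concern a single term, while sequence-level conditions only constrain $s_{i-1},s_i,s_{i+1}$ or the tail of $1$s. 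Any contiguous subsequence that does not chop into the middle of such a tail therefore inherits normal form verbatim.

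The substantive work is Fact (2), which I would prove by induction on $|(t_i)| + |(s_i)|$. The key observation is that structural features of the underlying linear order $L$ determine which top-level constructor the normal form must have used. For instance, $L$ has a first (respectively last) element exactly if the sequence begins (ends) with a term whose leftmost (rightmost) atom is a singleton, so the finite end-pieces of any concatenation are forced; and if $(t_i)$ consists of a single top-level shuffle $\mathbb{Q}_n(t_0,\dots,t_{n-1})$, then $L$ has no endpoints and is everywhere dense. In that case the arguments of the shuffle can be recovered as the isomorphism types of the maximal convex subsets of $L$ that appear interdensely, and the normal-form rules (in particular items 2 and 3 of Facts 2.6) guarantee that this multiset is in fact a set and does not absorb into a strictly larger shuffle. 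A similar but easier argument handles a top-level concatenation, using rule 4 to ensure the convex decomposition is canonical. Applying the inductive hypothesis to each recovered subterm yields $t_i = s_i$ for every $i$.

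The hard part, and the reason the author describes the full proof as ``long and uninformative,'' is verifying that the interdensely occurring convex components of a top-level shuffle really are canonically defined --- one must check that the colouring structure of $\mathbb{Q}_n$ can be reconstructed from the uncoloured linear order produced by the shuffle, which is precisely where the full strength of the Mwesigye--Truss refinement of Rosenstein's Proposition 8.35 is needed. Once this is in hand, the rest of the uniqueness argument reduces to a careful but mechanical case split on the top constructor.
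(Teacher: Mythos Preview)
The paper does not actually prove these facts. Immediately before the statement the author writes that ``the process of showing that such representations are unique and can describe every $\aleph_{0}$-categorical linear order is long and uninformative, so we shall not provide the proof, and simply state the pertinent facts.'' There is therefore no proof in the paper to compare your proposal against.

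Judged on its own terms, your outline is the natural approach and broadly sound, but two technical points need tightening. First, your termination argument for Fact~(1) and your induction for Fact~(2) both use the total symbol count $|(t_i)|+|(s_i)|$, yet the definition of normal form explicitly allows \emph{infinite} sequences (those ending in a tail of $1$s), for which this sum is infinite. You would need either a well-founded ordering adapted to such sequences or a left-to-right processing argument to make these steps go through. Second, your attribution of the hard step to ``the Mwesigye--Truss refinement of Rosenstein's Proposition~8.35'' is slightly off: that proposition says that $n$-orbits are determined by $2$-orbits of adjacent pairs, which is not the same as recovering the arguments of a shuffle from the uncoloured linear order it produces. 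The latter is indeed the crux, but it is a separate (if related) piece of combinatorics, and invoking the wrong lemma there would leave the genuine difficulty unaddressed.
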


These facts are required to show the following theorem about the possible maximal chains of an $\aleph_{0}$-categorical tree.

\begin{theorem}\label{theorem:maxchain}
If $T$ is an $\aleph_{0}$-categorical coloured tree then every maximal chain of $T$ is an $\aleph_{0}$-categorical coloured linear order.
\end{theorem}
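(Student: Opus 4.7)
The plan is to show that $C$ admits a normal form term representation, which by Facts \ref{facts:nf} and the Mwesigye--Truss extension of Rosenstein's theorem forces $C$ to be $\aleph_0$-categorical as a coloured linear order. The starting structural observation is that whenever $x \leq y$ both lie in $C$, the closed interval $[x,y]_T$ is a chain in $T$ (by the first tree axiom) and, by the maximality of $C$, is contained in $C$. Hence closed intervals in $C$ between two of its points are exactly the corresponding closed intervals of $T$, and carry the induced colouring.

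I would then show that only finitely many coloured linear order isomorphism types of such intervals appear. If $(x,y)$ and $(x',y')$ lie in the same $\aut(T)$-orbit (of which there are only finitely many, since $T$ is almost $2$-transitive by Ryll--Nardzewski), any witnessing automorphism restricts to a colour-preserving order-isomorphism $[x,y] \to [x',y']$. Moreover, each such interval is itself $\aleph_0$-categorical: the pointwise stabiliser $\aut(T)_{\{x,y\}}$ is oligomorphic on $T$, hence oligomorphic on $[x,y]$, and since it sits inside $\aut([x,y])$ the latter is oligomorphic too. So every closed interval of $C$ between two of its points has a coloured normal form term representation, and only finitely many such terms arise.

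The main task is then to assemble these pieces into a single (possibly infinite-sequence) normal form representation of $C$. For each $c \in C$, consider the equivalence relation on $\{y \in C : y > c\}$ defined by $y_1 \sim y_2$ iff $[c,y_1]$ and $[c,y_2]$ are isomorphic coloured linear orders. Since only finitely many interval types occur, there are finitely many equivalence classes, and their arrangement inside $C_{>c}$ is what must be analysed; a symmetric analysis applies below $c$. Classes that appear densely inside one another should be recognised as part of a $\mathbb{Q}_n$-shuffle; classes arranged monotonically should be recognised as part of a concatenation. Tails of $C$ (when $C$ is unbounded above or below) are handled via the possibly infinite sequence version of normal form in Facts \ref{facts:nf}, with the finite repertoire of interval types ensuring that any such infinite concatenation is itself $\aleph_0$-categorical.

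The main obstacle is this last assembly step, in particular showing that an apparent dense mixing of interval types really arises from a shuffle rather than some pathological arrangement, and that a long monotone run really assembles into a legitimate normal form concatenation. I expect this to be handled by a back-and-forth argument exploiting automorphisms of $T$: such automorphisms do not generally stabilise $C$ setwise, but using the finitely many interval types, the oligomorphicity of $\aut(T)_{\{x,y\}}$, and a Ramsey-style pigeonhole, one can successively correct them on tails to produce isomorphisms of $C$ witnessing the shuffle/concatenation structure encoded in the normal form.
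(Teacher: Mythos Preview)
Your preliminary observations are correct and useful: closed intervals $[x,y]$ of $C$ coincide with those of $T$, there are only finitely many such interval types up to coloured isomorphism, and each is $\aleph_0$-categorical via the oligomorphic action of $\aut(T)_{\{x,y\}}$. But the substance of the argument is entirely in what you call the ``assembly step'', and your sketch of that step is not a proof. Recognising which equivalence classes ``appear densely inside one another'' versus ``arranged monotonically'', and then promoting this to a genuine normal-form term for $C$ via back-and-forth and an unspecified Ramsey argument, is exactly where the difficulty lies; nothing you wrote explains how to carry it out, and in particular you never produce a single automorphism of $C$.

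You also miss the key structural simplification that the paper exploits. You correctly note that automorphisms of $T$ need not stabilise $C$ setwise, but in a tree the set of predecessors of any point is a chain, so for every $x \in C$ one has $C^{\leq x} = T^{\leq x}$. Hence any automorphism of $T$ sending $x$ to $x'$ restricts to a coloured order-isomorphism $C^{\leq x} \to C^{\leq x'}$ with no ``correction on tails'' needed. This means the isomorphism type of each initial segment $C^{\leq x}$ is determined by the $1$-orbit of $x$, and all such initial segments are $\aleph_0$-categorical. That is a much stronger and more directly usable statement than your interval-type bookkeeping.

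The paper's proof uses this via contradiction rather than by direct assembly. If $C$ is not $\aleph_0$-categorical, then since every proper initial segment $C^{\leq x}$ is $\aleph_0$-categorical, $C$ is an infinite concatenation $(L_i)_{i \in \omega}$ in normal form. Picking $x_i \in L_i$, almost $2$-transitivity of $T$ forces an automorphism $\phi$ with $\phi(x_0,x_{n+1}) = (x_0,x_{m+1})$ for some $m < n$; by the previous paragraph $\phi$ restricts to an isomorphism of the initial segments below $x_{n+1}$ and $x_{m+1}$. Uniqueness of normal form (Facts~\ref{facts:nf}) then yields a contradiction after a short case split on whether $L_{n+1}$ is finite or a shuffle. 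This bypasses the assembly problem entirely: normal-form uniqueness does the work that your proposed Ramsey/back-and-forth argument was meant to do.
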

\begin{proof}
Let $L$ be a maximal chain of $T$ such that $L$ is not $\aleph_{0}$-categorical as a linear order.  Every initial section of $L$ is an $\aleph_{0}$-categorical linear order.  Therefore $L$ is expressible as the concatenation of an infinite list of $\aleph_{0}$-categorical linear orders $(L_i)$.  We assume that $(L_i)$ is in normal form, which must be an infinite sequence as $L$ is not $\aleph_{0}$-categorical.

For each $i$ let $x_i \in L_i$.  The tree $T$ is $\aleph_{0}$-categorical so there must be an automorphism $\phi$ that sends $(x_0, x_{n+1})$ to $(x_0, x_{m+1})$ for some $m < n$.  The restriction of $\phi$ to $T^{\leq x_{n+1}}$ is an isomorphism from $L^{\leq x_{n+1}}$ to $L^{\leq x_{m+1}}$, so must send the set of predecessors of $x_{n+1}$ to the predecessors of $x_{m+1}$.

Suppose that $L_{n+1}$ is finite and therefore $\phi| _{T^{\leq x_{n+1}}}$ maps $L_0 \,^\wedge \ldots \,^\wedge L_n$ to $L_0 \,^\wedge \ldots \,^\wedge L_m$.  Thus the finite sequences $(L_i)^n_{i=0}$ and $(L_i)^m_{i=0}$ are isomorphic and Fact 3. of Facts \ref{facts:nf} shows that these sequences are in normal form.  Therefore $(L_i)^n_{i=0} = (L_i)^m_{i=0}$, and so $m=n$, giving a contradiction.

Suppose that $L_{n+1}$ is a shuffle, which we denote by $\mathbb{Q}_n(\tau_0, \ldots ,\tau_i)$.  We also suppose that $x_{n+1}$ is contained in a copy of $\tau_0$, and we use $z$ to label the point in $\mathbb{Q}_n$ that is replaced by that particular copy of $\tau_0$, and let $L_{n+1}'$ be the initial section of $L_{n+1}$ that corresponds to $(-\infty,z)$, the interval of $\mathbb{Q}_n$.

Since $(-\infty,z) \cong \mathbb{Q}_n$ we can deduce two isomorphisms, $L_{n+1}' \cong L_{n+1}$ and
$$L_0 \,^\wedge \ldots \, ^\wedge L_n \,^\wedge L_{n+1} \cong L_0 \,^\wedge \ldots \, ^\wedge L_n \,^\wedge L_{n+1}'$$
Thus the normal form representation of $L_{n+1}'$ is equal to the n.f. representation of $L_{n+1}$.  The function $\phi$ is an isomorphism, so the n.f. representation of $\phi(L_0 \,^\wedge \ldots \, ^\wedge L_n \,^\wedge L_{n+1}')$ is $L_0 \,^\wedge \ldots \, ^\wedge L_n \,^\wedge L_{n+1}$.

Therefore $\phi$ maps $L_i$ to itself for $i \leq n+1$, and thus the n.f. representation of $\tau_0^{\leq x_0}$ is also the n.f. representation of $L_{n+1} \,^\wedge \ldots \,^\wedge L_{m+1}^{\leq x_{m+1}}$.

$T$ is $\aleph_0$-categorical, so we may also assume that there is $m' \in \mathbb{N}$ such that there is an automorphism mapping $(x_0,x_{n+1})$ to $(x_0,x_{m'})$ and $m \not= m'$.  Again, we conclude that the n.f. representation of $\tau_0^{\leq x_0}$ is also the n.f. representation of $L_{n+1} \,^\wedge \ldots \,^\wedge L_{m'+1}^{\leq x_{m'+1}}$.

This is a contradiction, as the n.f representation of $\tau_0^{\leq x_0}$ is of fixed length.
\end{proof}

\begin{theorem}\label{Theorem:FinitelyManyChains}
If $T$ is a countable $\aleph_{0}$-categorical tree then $T$ has only finitely many maximal chains up to isomorphism.
\end{theorem}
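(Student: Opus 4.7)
Proof plan:

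I would proceed by contradiction. Suppose $T$ has infinitely many pairwise non-isomorphic maximal chains $L_0, L_1, L_2, \ldots$. By Theorem \ref{theorem:maxchain} each $L_i$ is an $\aleph_0$-categorical coloured linear order, so by the Mwesigye--Truss extension of Rosenstein's theorem it admits a unique normal form $\tau_i$ built from coloured singletons via finitely many concatenations and $\mathbb{Q}_n$-shuffles. The relevant colours may be taken to be the 1-orbits of $T$ restricted to the chains, which form a finite set by almost 1-transitivity.

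For each $n\ge 1$ and each $i$, define $\mathcal{O}_n(L_i)$ to be the set of $\aut(T)$-orbits of $T^n$ realised by some $n$-tuple from $L_i$. By almost $n$-transitivity of $T$, $T^n$ has only finitely many orbits, so $\mathcal{O}_n(L_i)$ ranges over a fixed finite set of possible subsets. Thus, for each fixed $n$, the invariant $\mathcal{O}_n(L_i)$ takes only finitely many values as $i$ varies; by successively thinning and then diagonalising, I obtain an infinite subsequence of $(L_i)$ on which $\mathcal{O}_n$ is simultaneously constant for every $n$.

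The key structural observation enabling the rest is that intervals in maximal chains are tree-rigid: for $a<b$ both in any maximal chain $L$ of $T$, the interval $\{c\in L:a\le c\le b\}$ coincides with $\{c\in T:a\le c\le b\}$, since initial sections of $T$ are linear and $L$ is a maximal chain. Consequently the isomorphism type of the interval, as a coloured linear order, depends only on the 2-orbit of $(a,b)$ in $T$. Using this, I would argue that any two maximal chains $L,L'$ sharing $\mathcal{O}_n$ for all $n$ are necessarily isomorphic, by a back-and-forth: given a partial isomorphism $\bar a\subset L\to\bar b\subset L'$ with $\bar a,\bar b$ in a common $T$-orbit, extend by any new $c\in L$ to $(\bar a,c)\to(\bar b,d)$ for a suitable $d\in L'$; interval rigidity ensures that the extension respects colour and order globally.

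The main obstacle is producing the $d\in L'$ at the extension step. The $\omega$-homogeneity of $T$ alone only provides a point $d$ lying in some chain of $T$, not necessarily in $L'$ itself. The shared $\mathcal{O}_{n+1}$-hypothesis supplies a realisation of the orbit of $(\bar a,c)$ inside $L'$ at some tuple, and interval rigidity then needs to be leveraged to move this realisation onto the specific base tuple $\bar b$; I expect the uniqueness of normal forms (Facts \ref{facts:nf}) to be what identifies the ``slot'' within $L'$ into which $d$ must fall. Once the back-and-forth succeeds, the thinned subsequence consists of pairwise isomorphic chains, contradicting the initial assumption and completing the proof.
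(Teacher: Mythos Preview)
Your interval-rigidity observation is correct and does exactly the work you want whenever the new point $c$ lies below or between already-matched points: if $\phi\in\aut(T)$ sends $\bar a$ to $\bar b$ and $c\le a_n$, then $\phi(c)\le b_n$ and hence $\phi(c)\in L'$ automatically. The gap you flag is real and is not closed by the gestures you make toward normal forms. When $c>a_n=\max\bar a$, the automorphism $\phi$ only guarantees $\phi(c)>b_n$, and above $b_n$ the chain $\phi(L)$ may lie in a different cone from $L'$. Your fallback --- that $\mathcal{O}_{n+1}(L')$ contains the orbit of $(\bar a,c)$ --- produces a tuple $(\bar b',d')\subset L'$ in that orbit, but nothing in your invariant lets you move $\bar b'$ to the specific tuple $\bar b$ \emph{inside $L'$}. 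Normal forms describe a single chain intrinsically; they do not tell you how two distinct maximal chains of $T$ sit relative to one another above a common stem, which is precisely what is at issue. So as it stands the back-and-forth does not go through, and the proposal is incomplete rather than merely sketchy.

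For comparison, the paper sidesteps the back-and-forth entirely. It colours each $a\in T$ by the set of isomorphism types of maximal chains through $a$; these colours are $\aut(T)$-invariant, only finitely many are realised, and they shrink as one moves upward. Picking a point $y$ whose colour $I_0$ is minimal and infinite yields a subtree $S=T^{\ge y}$ in which \emph{every} point lies on maximal chains of at least two distinct types $L_0,L_1$. This is the key manoeuvre that your approach lacks: once every $s$ in a copy of $L_0$ also lies on a copy of $L_1$, one gets $L_0^{\le s}=T^{\le s}=L_1^{\le s}$ directly, and a cofinal sequence in $L_0$ then exhibits $L_0$ as an initial segment of $L_1$ and symmetrically. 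Only at that final step does normal-form uniqueness enter, to conclude $L_0\cong L_1$ from mutual initial-segment embedding.
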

\begin{proof}
Let $T$ be a tree with infinitely many non-isomorphic maximal chains, which we call $L_n$ for $n \in \mathcal{J}$.  For each $I \subseteq \mathcal{J}$ we introduce colour predicate $C_I$ such that $T \models C_I(a)$ if and only if
$$I = \lbrace i \in \omega \: : \: a \:\textnormal{is contained in a maximal chain isomorphic to} \: L_i \rbrace$$
We introduce the following notation:
$$\mathcal{I} := \lbrace I \subseteq \omega \: : \: T \models \exists x C_I(x) \rbrace$$

If $I \not=J$ and $T \models C_I(a) \wedge C_J(b)$ then WLOG there is a maximal chain $A$ such that $A$ passes through $a$, but no maximal chain passing through $b$ is isomorphic to $A$.  Any automorphism of $T$ that maps $a$ to $b$ will have to map $A$ to a maximal chain that contains $b$, showing that $a$ and $b$ lie in different orbits of $\aut(T)$, and hence
$$\aut(T) = \aut( \langle T, \leq, C_I \: : \: I \in \mathcal{I} \rangle )$$

If $\mathcal{I}$ is infinite then there are infinitely many 1-orbits of $T$, and $T$ cannot be $\aleph_0$-categorical, so we assume that $\mathcal{I}$ is finite.  Since $T$ has infinitely many maximal chains, $\bigcup \mathcal{I}$ is infinite, so there must be an infinite member of $\mathcal{I}$.

If $a < b$ and $T \models C_I(a) \wedge C_J(b)$ then $J \subsetneq I$, so if $I_0$ is a minimal element of $\mathcal{I}$ then there exists an $a_0 \in T$ such that $T^{\geq a_0}$ is mono-chromatically coloured by $C_{I_0}$.

Only finitely many of these $C_I$ can be realised but $T$ has infinitely many non-isomorphic maximal chains, so there is a $J \in \mathcal{I}$ such that $J$ is infinite.  Let $x \in T$ be such that $T \models C_J(x)$ and let $I_0, \ldots I_{k-1}$ be the minimal elements of $\mathcal{I}$ where there is a $y \geq x$ such that $T \models C_{I_j}(y)$.
$$J \subseteq \bigcup_{j<k} I_j$$
$J$ is infinite, so at least one of the $I_j$ is infinite.  We assume that $I_0$ is.  Let $y \in T$ realise $C_{I_0}$, and let $S:= T^{\geq y}$.  Since $I_0$ is minimal, $S$ is monochromatic.

In short, from our $T$ we have found another tree, $S$ where every element lies on a copy of two non-isomorphic linear orders.  Let $L_0$ and $L_1$ be these non-isomorphic maximal chains and let $\lbrace s_i \in S: i \in \omega \rbrace$ be an enumeration of a copy of $L_0$.  We build by induction an embedding of $L_0$ into $L_1$.

Since $s_0$ also lies on a copy of $L_1$
$$L_0^{\leq s_0} \cong T^{\leq s_0} \cong L_1^{\leq s_1}$$
so let $\phi_0$ be an isomorphism from $L_0^{\leq s_0}$ to $L_1^{\leq s_0}$.

Suppose we have defined $\phi_l$.  Let $\alpha_{l+1} \in \mathbb{N}$ be the least number such that $s_{\alpha_{l+1}} > s_{\alpha_l}$.  The element  $s_{\alpha_{l+1}}$ is contained in both an copy of $L_0$ and $L_1$, so once again
$$L_0^{\leq s_{\alpha_{l+1}}} \cong T^{\leq s_{\alpha_{l+1}}} \cong L_1^{\leq s_{\alpha_{l+1}}}$$
By the induction hypothesis, we also know that $L_0^{\leq s_{\alpha_{l}}} \cong T^{\leq s_{\alpha_{l}}} \cong L_1^{\leq s_{\alpha_{l}}}$ thus the intervals $(s_{\alpha_{l}},s_{\alpha_{l+1}}] \subseteq L_0$ and $(s_{\alpha_{l}},s_{\alpha_{l+1}}] \subseteq L_1$ are isomorphic.  Let this be witnessed by $\psi_{l+1}$, and we define $\phi_{l+1} := \phi_{l} \cup \psi_{l+1}$

Then $\bigcup \phi_{l}$ witnesses the fact that $L_0$ is isomorphic to an initial section of $L_1$.  By symmetry, $L_1$ is isomorphic to an initial section of $L_0$ as well.

Therefore, if $\tau_0$ and $\tau_1$ are the normal form representations of $L_0$ and $L_1$ respectively $\tau_1 = \tau_0 ^\wedge \sigma_0 \: \mathrm{and} \: \tau_0 = \tau_1 ^\wedge \sigma_1$ for terms $\sigma_0$ and $\sigma_1$.  Therefore $\tau_1 = \tau_1 ^\wedge \sigma_1^\wedge \sigma_0$ and so $\sigma_0 = \emptyset = \sigma_1$ and thus $\tau_0 = \tau_1$.  This shows that $L_0 \cong L_1$, giving a contradiction.
\end{proof}

\section{Trees}
\subsection{Ramification Predicates}

Trees contain more information than which linear orders occur as their maximal chains, so in order to classify the $\aleph_0$-categorical trees using them we need a way to encode that extra information.

\begin{dfn}
Let $T$ be an $\aleph_0$-categorical tree, and let $\lbrace L_k \: : \: k \leq l \rbrace$ be the maximal chains of $T$.  For each $L_k$, we enumerate the $1$-orbits.

We define $R^i_{(m,n)}$ to be a unary predicate that is only realised by $x \in T$ if $x$ lies on exactly $i$ maximal chains isomorphic to $L_m$ such that $x$ lies in the $n^{\mathrm{th}}$ orbit of $L_m$.  Additionally

$$J_T := \lbrace (i,(m,n)) \: : \: \exists x \: T \models R^i_{(m,n)}(x) \rbrace$$
\end{dfn}

\begin{lemma}\label{Lemma:NoKillingOrbits}
For all trees $T$
$$\aut(\langle T, \leq \rangle) \cong \aut(\langle T, \leq, R^i_{(m,n)} \, : \, (i,(m,n)) \in J_T \rangle)$$
\end{lemma}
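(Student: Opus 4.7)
The containment $\aut(\langle T, \leq, R^i_{(m,n)} \rangle) \subseteq \aut(\langle T, \leq \rangle)$ is immediate, since adding predicates can only shrink the automorphism group, and both act on the same underlying set. The content of the lemma is the reverse inclusion: every $\phi \in \aut(\langle T, \leq \rangle)$ already preserves each predicate $R^i_{(m,n)}$. My plan is to verify this by unpacking the definition of the predicates and exploiting the fact that an order-automorphism of $T$ sends maximal chains to maximal chains, preserving all isomorphism-invariant data along them.

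Fix $\phi \in \aut(\langle T, \leq \rangle)$ and $x \in T$ with $T \models R^i_{(m,n)}(x)$. The first step is to observe that $\phi$ induces a bijection between the set $\mathcal{M}(x)$ of maximal chains through $x$ and the set $\mathcal{M}(\phi(x))$ of maximal chains through $\phi(x)$: if $L \in \mathcal{M}(x)$, then $\phi(L)$ is a chain (since $\phi$ is order-preserving) containing $\phi(x)$, and it is maximal because any strictly larger chain could be pulled back along $\phi^{-1}$ to enlarge $L$. Moreover, $\phi\!\!\upharpoonright_L: L \to \phi(L)$ is an order-isomorphism sending $x$ to $\phi(x)$. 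Hence $L \cong L_m$ if and only if $\phi(L) \cong L_m$, so the count of chains through $\phi(x)$ isomorphic to $L_m$ equals $i$.

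The key step is to show that, within such a chain $L \cong L_m$, the points $x$ and $\phi(x)$ sit in the same $1$-orbit of $L_m$. Fix isomorphisms $\psi_L: L_m \to L$ and $\psi_{\phi(L)}: L_m \to \phi(L)$. Then
$$\theta := \psi_{\phi(L)}^{-1} \circ (\phi\!\!\upharpoonright_L) \circ \psi_L$$
is an automorphism of $L_m$ that sends $\psi_L^{-1}(x)$ to $\psi_{\phi(L)}^{-1}(\phi(x))$. Therefore these two preimages lie in the same $1$-orbit of $L_m$, i.e. in the same indexed orbit used to define the predicates. Consequently every chain through $\phi(x)$ isomorphic to $L_m$ witnesses $\phi(x)$ in the $n^{\textnormal{th}}$ orbit of $L_m$ precisely when the corresponding chain through $x$ does, and so the count $i$ of such chains is preserved. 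This gives $T \models R^i_{(m,n)}(\phi(x))$, completing the argument that $\phi$ preserves the predicates.

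The only real subtlety is bookkeeping: the $1$-orbit of $L_m$ realised at a point is an invariant of the abstract isomorphism type, not of any particular embedding, and the display above makes this formal. The rest is routine: because $\phi$ and $\phi^{-1}$ both preserve the predicates, equality of the corresponding automorphism groups follows, and the isomorphism asserted in the lemma is simply the identity map on the underlying group.
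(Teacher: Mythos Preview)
Your proof is correct and follows the same idea as the paper's: an order-automorphism of $T$ permutes maximal chains, carrying each chain through $x$ isomorphically to a chain through $\phi(x)$, so all the isomorphism-invariant data encoded by the $R^i_{(m,n)}$ is preserved. The paper compresses this into a one-line observation (that $\phi$ maps maximal chains through $y_0$ to maximal chains through $y_1$, with $\phi(L^{\geq y_0}) \cong \phi(L)^{\geq y_1}$), whereas you spell out the bijection on chains and the orbit-preservation via the composite $\psi_{\phi(L)}^{-1}\circ(\phi\!\!\upharpoonright_L)\circ\psi_L$; there is no substantive difference in strategy.
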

\begin{proof}
If $\phi \in \aut(T)$ maps $y_0$ to $y_1$ also maps the maximal chains passing through $y_0$ to the maximal chains passing through $y_1$.  In particular, if $L$ is a maximal chain that contains $y_0$, then $\phi(L^{\geq y_0}) \cong \phi(L)^{\geq y_1}$.
\end{proof}

\begin{lemma}\label{Theorem:AddingStructuretoL}
If $\langle T, \leq \rangle$ is $\aleph_0$-categorical then for any maximal chain $L$ the structure $$\langle L, \leq, R^i_{(m,n)} \, : \, (i,(m,n)) \in J_T \rangle$$ is also $\aleph_0$-categorical.
\end{lemma}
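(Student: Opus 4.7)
My plan is to verify $\aleph_0$-categoricity of $\langle L, \leq, R^i_{(m,n)} \rangle$ via Ryll--Nardzewski, by showing its automorphism group has finitely many orbits on $L^n$ for every $n$. By Lemma \ref{Lemma:NoKillingOrbits} and $\aleph_0$-categoricity of $T$, the group $\aut(T) = \aut(T, R)$ is oligomorphic on $T$, hence has finitely many orbits on $L^n \subseteq T^n$. I will then show each such $\aut(T)$-orbit on $L^n$ lies inside a single $\aut(L, R)$-orbit, which yields the required finiteness.

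Given $\bar a, \bar b \in L^n$ in the same $\aut(T)$-orbit, say $\phi(\bar a) = \bar b$, I will construct $\psi \in \aut(L, R)$ with $\psi(\bar a) = \bar b$ by patching. Sort $\bar a$ as $a_1 < \dots < a_n$, so that $b_i = \phi(a_i)$ is the corresponding sort of $\bar b$, and decompose $L$ into the initial segment $L^{\leq a_1}$, middle intervals $[a_i, a_{i+1}]_L$, and the terminal segment $L^{\geq a_n}$. Because in a tree every initial section and every closed interval is a chain, the first $n$ pieces of $L$ coincide with the corresponding intervals of $T$, so $\phi$ already maps each of them $R$-isomorphically onto its counterpart for $\bar b$. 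The only piece where $\phi$ fails to deliver the right target is the terminal one: $\phi(L^{\geq a_n}) = \phi(L)^{\geq b_n}$ is a maximal chain of $T^{\geq b_n}$ through $b_n$, not necessarily equal to $L^{\geq b_n}$.

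The main obstacle is thus to produce an $R$-isomorphism $\chi : \phi(L)^{\geq b_n} \to L^{\geq b_n}$ fixing $b_n$; then $\psi$ is obtained by replacing $\phi|_{L^{\geq a_n}}$ with $\chi \circ \phi|_{L^{\geq a_n}}$. Both chains are maximal in $T^{\geq b_n}$ through $b_n$ of the same $(L_m, k)$-type, where $L$ has type $L_m$ and the common $R$-color of $a_n$ and $b_n$ (witnessed by some $R^i_{(m,k)}$) places them in the $k$-th orbit of $L_m$. The predicate tells us there are only $i$ chains of this type through $b_n$, so the stabiliser $\aut(T)_{b_n}$ acts on a finite set, and $\chi$ can be taken as the restriction of any element mapping $\phi(L)^{\geq b_n}$ setwise to $L^{\geq b_n}$. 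Showing that such an element exists --- equivalently, transitivity of $\aut(T)_{b_n}$ on chains of a given $(L_m, k)$-type through $b_n$ --- is the technical heart of the argument, and I expect to prove it by a back-and-forth along the two chains using oligomorphicity of $\aut(T)_{b_n}$.
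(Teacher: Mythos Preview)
The paper's proof is a one-line application of Theorem~\ref{theorem:maxchain}: by Lemma~\ref{Lemma:NoKillingOrbits} the expansion $\langle T,\leq,R^i_{(m,n)}\rangle$ has the same automorphism group as $\langle T,\leq\rangle$, hence is itself $\aleph_0$-categorical, and Theorem~\ref{theorem:maxchain} (applied to this coloured tree) says every maximal chain is an $\aleph_0$-categorical coloured linear order. Your approach tries to bypass Theorem~\ref{theorem:maxchain} altogether and build automorphisms of $(L,\leq,R)$ directly by patching.

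The patching on $L^{\leq a_n}$ and on the closed intervals $[a_i,a_{i+1}]$ is fine, for the reasons you give. The genuine gap is in the construction of $\chi$. First, the claim ``there are only $i$ chains of this type through $b_n$, so the stabiliser acts on a finite set'' is not correct: the index $i$ in $R^i_{(m,n)}$ can perfectly well be $\aleph_0$ (infinitely branching $\aleph_0$-categorical trees abound), and even when $i$ is finite, a finite action does not give transitivity. Second, the back-and-forth you sketch between $\phi(L)^{\geq b_n}$ and $L^{\geq b_n}$ breaks at the top. If the current partial map $\{e_0<\dots<e_j\}\mapsto\{f_0<\dots<f_j\}$ is witnessed by some $\theta\in\aut(T)$ and you try to extend by $c>e_j$, then $\theta(c)>f_j$ but $\theta(c)$ need not lie in $L$: it may sit in a different cone above $f_j$. (For $c$ sandwiched between $e_l$ and $e_{l+1}$ this problem does not arise, since everything below $f_{l+1}$ is linear; the difficulty is only at the top.) So the invariant ``same $\aut(T)$-orbit'' is too weak to push the forth step through, and strengthening it to ``some $\theta$ carries one chain onto the other setwise'' just restates what you are trying to prove. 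What you actually need is that two $R$-coloured maximal chains which are globally isomorphic and agree below $b_n$ are also isomorphic above $b_n$; that is precisely the kind of fact Theorem~\ref{theorem:maxchain} (via the normal-form machinery for $\aleph_0$-categorical linear orders) supplies, and your route does not avoid that work, it only relocates it.
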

\begin{proof}
We apply Theorem \ref{theorem:maxchain} to $\langle T, \leq, R^i_{(m,n)} \, : \, (i,(m,n)) \in J_T \rangle$.
\end{proof}

\subsection{Classification}\label{Section:CombineColours}

\begin{prop}
If $(T,<)$ is $\aleph_{0}$-categorical then $(T^+,<)$ is $\aleph_{0}$-categorical.
\end{prop}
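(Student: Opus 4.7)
The plan is to show that $T^+$ is almost $2$-transitive and then invoke Theorem \ref{Thm:2impliesn} (applied to $T^+$, noting $(T^+)^+ = T^+$) to promote this to almost $n$-transitivity for every $n$; since $T^+$ is countable (as remarked after Definition \ref{def:irrational}), the Ryll--Nardzewski theorem then gives $\aleph_0$-categoricity of $T^+$.

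Two structural facts are needed first. \emph{(i)} Every element of $T^+$ is either in $T$ or has the form $a \wedge b$ for some $a, b \in T$. This is because in any tree the three pairwise meets of three points have two of them equal and the third above, so meets of meets collapse, and the meet-closure of $T$ is produced in one step. \emph{(ii)} Every $\phi \in \aut(T)$ extends to an automorphism $\hat\phi$ of $T^+$ by the prescription $\hat\phi(a \wedge b) := \phi(a) \wedge \phi(b)$. Well-definedness follows because the set $\{t \in T : t \leq a \wedge b\} = \{t \in T : t \leq a,\, t \leq b\}$ depends only on $a \wedge b$, and its image under $\phi$ is $\{t \in T : t \leq \phi(a),\, t \leq \phi(b)\}$, whose supremum in $T^+$ is $\phi(a) \wedge \phi(b)$; two representations $a \wedge b = a' \wedge b'$ thus give the same image set and hence the same supremum. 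Order-preservation of $\hat\phi$ follows by expressing the order on $T^+$ through downward-closed $T$-sets, and the inverse is $\widehat{\phi^{-1}}$.

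With these in hand, the main argument is short. For any pair $(x_1, x_2) \in (T^+)^2$, by \emph{(i)} we can code it by a $4$-tuple $(y_1, z_1, y_2, z_2) \in T^4$ with $x_i = y_i \wedge z_i$ (taking $y_i = z_i = x_i$ when $x_i \in T$). Since $T$ is $\aleph_0$-categorical, $T^4$ has only finitely many $\aut(T)$-orbits; and by \emph{(ii)}, any two coding tuples in the same $\aut(T)$-orbit decode to a pair of $(T^+)^2$-tuples in the same $\aut(T^+)$-orbit. Hence $T^+$ has only finitely many $2$-orbits, completing the reduction.

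The main obstacle I anticipate is the verification of \emph{(ii)}: one must show not only that $\hat\phi$ is well-defined on the possibly many formal meet representations of the same point, but also that it is genuinely order-preserving on $T^+$. The key technical observation is that an irrational point $a \wedge b$ is completely determined by the downward-closed subset of $T$ it induces, which lets both the well-definedness and the order-preservation checks be pushed back to the order structure on $T$ alone, where $\phi$ acts as an automorphism.
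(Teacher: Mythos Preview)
Your argument is correct and rests on the same observation the paper uses: every irrational point of $T^+$ is $a\wedge b$ for some $a,b\in T$, and automorphisms of $T$ extend to $T^+$, so orbits on $T^+$ are controlled by orbits of (longer) tuples from $T$. The paper's one-line proof just states this and leaves the rest implicit.

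The only organisational difference is that you code \emph{pairs} from $T^+$ by $4$-tuples from $T$ and then invoke Theorem~\ref{Thm:2impliesn} to pass from almost $2$-transitivity to almost $n$-transitivity. This detour is not needed: exactly the same coding shows directly that any $n$-tuple from $T^+$ is represented by a $2n$-tuple from $T$, so finiteness of $2n$-orbits on $T$ immediately gives finiteness of $n$-orbits on $T^+$ for every $n$. That is presumably what the paper's sentence is gesturing at, and it avoids appealing to the (unproved-here) Theorem~\ref{Thm:2impliesn}. Your verification of well-definedness and order-preservation for $\hat\phi$ is a welcome addition of detail that the paper omits.
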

\begin{proof}
The orbits of the irrational elements of $T^+$ are determined by the orbits of pairs from $T$.
\end{proof}

$T^+$ being $\aleph_{0}$-categorical is not enough to ensure that $T$ is $\aleph_{0}$-categorical.  This suggests that we need a way of restricting how points in $(T^+,<)$ can be deleted to ensure that the remaining structure is still $\aleph_{0}$-categorical.  Recall from Definition \ref{def:irrational} that an irrational point of $T^+$ is a point in $T^+ \setminus T$.

\begin{theorem}\label{thm:whereIdef}
Let $T$ be a tree and $I$ be a unary predicate for the irrational points.  Then $(T,<)$ is $\aleph_{0}$-categorical if and only if $(T^+,<,I)$ is $\aleph_{0}$-categorical. 
\end{theorem}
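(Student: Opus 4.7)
The plan is to identify the permutation group $\aut(T)$ acting on $T$ with $\aut(T^+,<,I)$ acting on $T^+$, and then transfer $\aleph_0$-categoricity via the Ryll-Nardzewski theorem in both directions.

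First I would verify that restriction and canonical extension give mutually inverse bijections $\aut(T^+,<,I) \leftrightarrow \aut(T)$. Any $\phi \in \aut(T^+,<,I)$ preserves $I$, hence its complement $T$, and so restricts to an order automorphism of $T$. Conversely, given $\phi \in \aut(T)$, the assignment $x \wedge y \mapsto \phi(x) \wedge \phi(y)$ is forced on irrational points of $T^+$, since in a ramification complete tree the meet of two elements is determined by the order, so this extension is well-defined, order-preserving, and sends $T$ bijectively onto $T$ and therefore $T^+ \setminus T$ bijectively onto itself, preserving $I$. The two procedures are mutual inverses, so the groups are isomorphic as abstract groups and their actions on $T$ and $T^+$ are compatible with the inclusion $T \hookrightarrow T^+$.

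Next I would transfer oligomorphy. The easy direction is $(\Leftarrow)$: if $(T^+,<,I)$ is $\aleph_0$-categorical then $\aut(T^+,<,I)$ is oligomorphic on $T^+$, and restricting to $n$-tuples from the subset $T = \{x : \neg I(x)\}$ yields only finitely many orbits on $T^n$ for each $n$, so $T$ is $\aleph_0$-categorical by Ryll-Nardzewski. For the reverse direction, suppose $T$ is $\aleph_0$-categorical; the preceding Proposition gives that $T^+$ alone is $\aleph_0$-categorical, but we need the refined version with $I$. The argument of that Proposition in fact establishes more than stated: the orbit of an irrational point $\alpha \in T^+ \setminus T$ is determined by the $\aut(T)$-orbit of any pair $(x,y)$ from $T$ with $\alpha = x \wedge y$. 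Hence the $\aut(T^+,<,I)$-orbit of an $n$-tuple from $T^+$ is determined by the $\aut(T)$-orbit of a suitable $2n$-tuple of rational witnesses, together with the bookkeeping of which coordinates are rational; finitely many orbits on $T^{2n}$ bounds the orbits on $(T^+)^n$, so $\aut(T^+,<,I)$ is oligomorphic on $T^+$ and Ryll-Nardzewski gives the conclusion.

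The main obstacle is the step in which rationality is preserved on the nose by the extension: one must check that the canonical extension of $\phi \in \aut(T)$ takes $T$ exactly onto $T$, not merely into $T^+$. This follows because the inverse $\phi^{-1}$ also extends canonically, and the two extensions are inverse to each other on $T^+$, so neither can mix rational with irrational. Once this is in place, the remainder is just counting orbits using the preceding Proposition.
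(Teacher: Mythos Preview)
Your proposal is correct and follows the same route as the paper: both establish the bijection $\aut(T,<)\cong\aut(T^+,<,I)$ via restriction and canonical extension, and use it to transfer $\aleph_0$-categoricity. The paper's proof is a single sentence recording that bijection, while you spell out why it is well-defined, why rationality is preserved, and (crucially) why the bijection of groups actually carries oligomorphy across---an inference the paper leaves entirely implicit; your detour through the preceding Proposition and the $2n$-tuple witnesses for irrational meets is exactly the content hidden in the paper's terse conclusion.
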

\begin{proof}
An automorphisms of $(T,<)$ extends uniquely to an automorphism of $(T^+,<,I)$, and automorphisms of $(T^+,<,I)$ restrict uniquely to an automorphism of $(T,<)$, so $\aut(T,<) \cong \aut(T^+,<,I)$.
\end{proof}

\begin{lemma}\label{Lemma:AddingIrrationalstoI}
If $(T^+,<,I)$ is $\aleph_{0}$-categorical then if $L$ is a maximal chain of $T^+$ then the linear order $(L,<,I)$ is $\aleph_{0}$-categorical.
\end{lemma}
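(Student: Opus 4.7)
The plan is to reduce this lemma to a direct application of Theorem \ref{theorem:maxchain}, exactly as was done for Lemma \ref{Theorem:AddingStructuretoL}. The only nontrivial point is to justify that $(T^+, <, I)$ fits into the framework of that theorem, i.e.\ that it can be regarded as an $\aleph_0$-categorical \emph{coloured} tree.

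First, I would observe that the single predicate $I$ can be converted into a pair of mutually exclusive unary predicates (namely $I$ and its complement $T^+\setminus I$, which picks out the rational points), so that $(T^+, <, I)$ is literally a $2$-coloured tree in the sense of Definition \ref{dfn:trees}. The hypothesis that $(T^+, <, I)$ is $\aleph_0$-categorical then says that this coloured tree is $\aleph_0$-categorical.

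Next, I would invoke Theorem \ref{theorem:maxchain}: every maximal chain of an $\aleph_0$-categorical coloured tree is an $\aleph_0$-categorical coloured linear order. Applied to our maximal chain $L$, this yields that $(L, <, I\cap L)$, or equivalently $(L, <, I)$, is $\aleph_0$-categorical as a coloured linear order, and in particular as a structure in its own right.

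There is no real obstacle: the lemma is essentially a bookkeeping consequence of Theorem \ref{theorem:maxchain}, and the entire argument should be a single line, matching the proof of Lemma \ref{Theorem:AddingStructuretoL}. The only thing one needs to be slightly careful about is that Theorem \ref{theorem:maxchain} is stated for coloured trees with mutually exclusive predicates, which the reinterpretation of $I$ into $\{I, T^+\setminus I\}$ handles trivially.
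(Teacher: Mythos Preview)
Your proposal is correct and essentially matches the paper's own proof, which simply says ``the proof of Theorem~\ref{theorem:maxchain} is easily adapted to this lemma.'' Your observation that $(T^+,<,I)$ is literally a $2$-coloured tree (via $I$ and its complement) lets you invoke Theorem~\ref{theorem:maxchain} directly rather than re-running its argument, which is if anything slightly cleaner than what the paper writes.
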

\begin{proof}
The proof of Theorem \ref{theorem:maxchain} is easily adapted to this lemma.
\end{proof}

We are now ready to prove our main theorem about trees.

\begin{theorem}\label{Theorem:CCTreesMain}
$\langle T^+, I, <, R^i_{(m,n)} \, : \, (i,(m,n)) \in J_T \rangle$ is $\aleph_{0}$-categorical if and only if:
\begin{enumerate}
\item only finitely many of the $R^i_{(m,n)}$ are realised;
\item if $L$ is a maximal chain of $T^+$ then $\langle L, I, <, R^i_{(m,n)} \, : \, (i,(m,n)) \in J_T \rangle$ is $\aleph_{0}$-categorical; and
\item there are only finitely many maximal chains of $T^+$ up to isomorphism in the language $\langle  I, <, R^i_{(m,n)} \: : \: (i,(m,n)) \in J_T \rangle$.
\end{enumerate}
\end{theorem}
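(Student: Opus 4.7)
The forward direction is a collection of routine consequences of $\aleph_0$-categoricity. Condition (1) is immediate from Ryll--Nardzewski applied to 1-types, since distinct realised $R^i_{(m,n)}$ give distinct 1-types. Condition (2) is the conjunction of Lemma~\ref{Theorem:AddingStructuretoL} and Lemma~\ref{Lemma:AddingIrrationalstoI}: the arguments go through verbatim once the language is enlarged, because adding unary predicates only refines the orbit structure. Condition (3) is Theorem~\ref{Theorem:FinitelyManyChains} re-run inside the expanded language.

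For the backward direction, assume (1), (2), (3). I would show that $\langle T^+, I, <, R^i_{(m,n)}\rangle$ is almost $n$-transitive for every $n$. Since $T^+$ is its own ramification completion, I would first apply the coloured analogue of Theorem~\ref{Thm:2impliesn} to reduce to almost 2-transitivity; the proof of that theorem extends to structures carrying unary predicates with no substantive change, since unary colours merely refine 1-orbits.

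So the heart of the matter is to bound the number of 2-orbits. Given a pair $(a,b)$, let $c = a \wedge b \in T^+$. The quantifier-free type of $(a,b)$ in the expanded language is captured by a finite packet of data: the values of the $R^i_{(m,n)}$- and $I$-predicates at $a$, $b$, $c$ (finitely many by (1)); the isomorphism type of a maximal chain witnessing the comparable pair $(c,a)$ and of one witnessing $(c,b)$ (finitely many by (3)); the 1-orbits of $a$, $b$, $c$ within those chains (finitely many by (2) and Ryll--Nardzewski applied to the chain); and, when $a \not\le b \not\le a$, the unordered pair of cones of $c$ into which $a$ and $b$ fall. The last datum has finitely many possibilities because the ramification predicate at $c$ records the multiset of $(\mathrm{chain\text{-}type},\mathrm{1\text{-}orbit})$ pairs among upward cones. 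Counting all combinations gives only finitely many invariants.

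The main obstacle, and the delicate part, is to upgrade ``finitely many invariants'' to ``finitely many $\aut(T^+)$-orbits'', i.e.\ to show that any two pairs with the same invariant are actually conjugate. The plan is a back-and-forth on an enumeration of $T^+$. At each stage, given a finite partial isomorphism $\phi$ preserving all predicates and meets, and a fresh point $d$, one first closes under meets $d \wedge x$ for $x \in \mathrm{dom}(\phi)$ (using ramification-completeness of $T^+$), then chooses the image of $d$ to lie in a cone above $\phi(c')$ of the same ramification type as the cone of the relevant meet $c' = d \wedge x$ on the source side. The key point is that because the ramification predicate at $c'$ encodes the full multiset of cone-types, any source cone above $c'$ has a target cone of matching type above $\phi(c')$, and the stabiliser of $c'$ in $\aut(T^+)$ acts transitively on cones of a given type; once the cone is chosen, condition (2) places $d$ inside it with the correct 1-type. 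The remaining bookkeeping --- irrationals landing in $T^+ \setminus T$ are harmless because they are tagged by $I$ --- is routine.
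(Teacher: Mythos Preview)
Your forward direction matches the paper's exactly.

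For the backward direction, both you and the paper run a back-and-forth, but the paper organises it differently and in a way that sidesteps a genuine soft spot in your sketch. The paper does not do a point-by-point back-and-forth over an enumeration of $T^+$; instead it starts from a single maximal-chain isomorphism carrying $(x_0,y_0)$ to $(x_1,y_1)$ (here one may reduce to comparable pairs), and then at each stage extends the partial map by adjoining, for every newly covered point $t$, one maximal chain in each as-yet-untouched cone above $t$, mapped to a maximal chain of the same coloured isomorphism type through $\phi(t)$. The unit of the induction is a \emph{chain}, not a point, and this is natural because the predicates $R^i_{(m,n)}$ literally count maximal chains of each coloured type through a point: matching chain to chain is exactly what the hypotheses hand you.

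Your version instead tries to match \emph{cones}, and here two things go wrong. First, the sentence ``the stabiliser of $c'$ in $\aut(T^+)$ acts transitively on cones of a given type'' is circular: transitivity properties of $\aut(T^+)$ are precisely what you are trying to establish, so you cannot invoke them mid-construction. Second, the claim that ``the ramification predicate at $c'$ encodes the full multiset of cone-types'' is not literally true: $R^i_{(m,n)}(c')$ records the total number of maximal chains of each type through $c'$, not how those chains are partitioned among the cones above $c'$. Two points can agree on every $R^i_{(m,n)}$ yet have their upward chains grouped into cones differently. What rescues the situation is that the $R$-predicates along a chain just above $c'$ do see the cone it sits in; this is exactly why the paper's chain-by-chain extension works and why working with coloured chain types (which is what conditions (2) and (3) control) is the right granularity. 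Your point-by-point scheme can be repaired, but only by reformulating the extension step in terms of chains rather than cones --- at which point it becomes the paper's argument.
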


\pagebreak
\begin{proof}

$\Rightarrow$:  Since $\langle T^+, I, <, R^i_{(m,n)} \: : \:(i,(m,n)) \in J_T \rangle$ is $\aleph_{0}$-categorical it only has finitely many 2-orbits.  This means that only finitely many of the $R^i_m$'s can be realised.  Theorem \ref{theorem:maxchain} shows that $\langle L,<,I,R^i_{(m,n)} \: : \: (i,(m,n)) \in J_T \rangle$ is $\aleph_{0}$-categorical and Condition 3 is shown by Theorem \ref{Theorem:FinitelyManyChains}.

$\Leftarrow$:  Let $T$ be a tree that satisfies the conditions of the theorem, and suppose that $T$ has infinitely many 2-orbits.  Since only finitely many of the $R^i_{(m,n)}$ are realised, we may assume that there are $(x_0,y_0)$ and $(x_1,y_1)$ such that:
\begin{enumerate}
\item $(x_0,y_0)$ and $(x_1,y_1)$ belongs to a different 2-orbit;
\item $x_i < y_i$; and
\item $T \models R^i_{(m,n)}(y_0) \Leftrightarrow T \models R^i_{(m,n)}(y_1)$ for all $R^i_{(m,n)}$
\end{enumerate}
Since $y_0$ and $y_1$ satisfy the same $R^i_{(m,n)}$, they lie on maximal chains which are isomorphic to the same $\aleph_0$-categorical linear order, which we will call $L$.  We may assume that $\langle L, \leq , x_0, y_0 \rangle \cong \langle L, \leq , x_1, y_1 \rangle$.  We build an automorphism of $T$ that maps $(x_0, y_0)$ to $(x_1, y_1)$ inductively as follows:

\begin{description}
\item[Base Case]  Let $\phi_0 : \langle L, \leq , x_0, y_0 \rangle \rightarrow \langle L, \leq , x_1, y_1 \rangle$ be an isomorphism.  There are maximal chains $T_0$ and $S_0$ of $T$ that contain $(x_0, y_0)$ and $(x_1, y_1)$ respectively such that $T_0 \cong L \cong S_0$.  Thus $\phi_0$ can be viewed as a partial automorphism of $T$ that maps $T_0$ to $S_0$.
\item[Odd Step] Let $n$ be odd, let $T_n:= \mathrm{Dom}(\phi_n)$, let $S_n:= \mathrm{Im}(\phi_n)$ and let $t \in T_n \setminus T_{n-1}$.  For each cone of $t$ that is disjoint from $T_n$, pick a maximal chain.  We denote these maximal chains as $L_i(t)$, where $i \in I(t)$, an indexing set for each $t$.

Since $\phi_{n}$ is a partial automorphism of the language
$$\langle <, I, R^i_{(m,n)} \, : \, (i,(m,n)) \in J_T \rangle$$
the image $\phi_{n}(t)$ satisfies all of the same $R^i_{(m,n)}$ there is an isomorphism $\psi_{i,t}$ that maps $ L_i(t)$ to $K_i(\phi_n(t))$ and $\psi_{i,t}(t) =\phi_n(t)$.

This $\psi_{i,t}$ is also a partial automorphism. Since $\psi_{i,t}(t) = \phi_n(t)$ the union $\phi_n \cup (\psi_{i,t}|_{ L_i(t)^{> t}})$ is also a partial isomorphism.  Indeed, since each $L_i(t)$ lies in a different cone to any other $L_i(t)$,
$$\phi_{n+1} := \phi_n \cup \bigcup_{t \in T_n \setminus T_{n-1}} \bigcup_{i \in I(t)} \psi_{i,t}|_{ L_i(t)^{> t}}$$
is a partial isomorphism.

\item[Even Step] The even step is very similar to the odd step, except that we map maximal chains passing through the elements of $S_n \setminus S_{n-1}$ back, and expand $\phi_{n}$ by the inverses of these maps.

\end{description}

Then $\phi:= \bigcup_{n \in \mathbb{N}}\phi_n$ is an automorphism of $T$ that maps $(x_0,y_0)$ to $(x_1,y_1)$, showing that $T$ must be $\aleph_0$-categorical.
\end{proof}

This gives us necessary and sufficient conditions for $(T, \leq)$ to be $\aleph_{0}$-categorical.  A description of the coloured $\aleph_{0}$-categorical trees is contained in the proof of Theorem \ref{Theorem:CCTreesMain}, as we will now show.

\begin{cor}
A coloured tree $(T, < , C_0, \ldots, C_k)$ is $\aleph_{0}$-categorical iff
\begin{itemize}
\item only finitely many of the $R^i_{(m,n)}$ are realised;
\item $\langle L, <, I, C_0, \ldots, C_k,  R^i_{(m,n)} \: : \: (i,(m,n)) \in J_T \rangle$ is $\aleph_{0}$-categorical for every maximal chain $L$; and
\item there are only finitely many such maximal chains up to isomorphism in the language
$$\langle <, I, C_0, \ldots, C_k, R^i_{(m,n)} \: : \: (i,(m,n)) \in J_T  \rangle$$
\end{itemize}
where the $C_i$ are the colour predicates.
\end{cor}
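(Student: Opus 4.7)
The plan is to follow the proof of Theorem \ref{Theorem:CCTreesMain} essentially verbatim, treating the colour predicates $C_0, \ldots, C_k$ as additional unary information that the automorphisms must respect. All of the supporting machinery is already set up to accommodate arbitrary unary predicates: Theorem \ref{theorem:maxchain} is already stated for coloured trees, the Mwesigye--Truss extension of Rosenstein's theorem handles coloured $\aleph_0$-categorical linear orders, and the normal-form uniqueness (Facts \ref{facts:nf}) goes through once coloured singletons are permitted as base terms. So adding $C_0, \ldots, C_k$ only refines orbits and does not disrupt any of the earlier arguments.

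For the forward direction, I would argue exactly as in $(\Rightarrow)$ of Theorem \ref{Theorem:CCTreesMain}. Finitely many 2-orbits in the coloured language still forces only finitely many $R^i_{(m,n)}$ to be realised, because a tuple's 2-orbit in the richer language determines its 2-orbit in the poorer one. Theorem \ref{theorem:maxchain} applied to $\langle T^+, \le, I, C_0, \ldots, C_k, R^i_{(m,n)} \rangle$ yields condition 2. For condition 3, I would reprove Theorem \ref{Theorem:FinitelyManyChains} in the presence of the $C_j$, observing that its proof only manipulates colour predicates (introducing the $C_I$ for chain-types) and uses the uniqueness of normal form, both of which are stable under adding further unary predicates to the language.

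For the converse, I would repeat the back-and-forth from $(\Leftarrow)$ of Theorem \ref{Theorem:CCTreesMain}. Assume there are infinitely many 2-orbits; by finiteness of the realised $R^i_{(m,n)}$ and of the $2^{k+1}$ possible colour combinations, one can find pairs $(x_0, y_0)$ and $(x_1, y_1)$ with $x_i < y_i$ in distinct 2-orbits but satisfying the same $R^i_{(m,n)}$, the same $I$-value, and the same colours on both coordinates. Then $y_0$ and $y_1$ lie on maximal chains isomorphic to the same coloured linear order $L$, which by condition 2 is $\aleph_0$-categorical, so there is an isomorphism sending $(x_0, y_0) \mapsto (x_1, y_1)$ within $L$. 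This is the base case $\phi_0$. The odd and even steps are identical to those in Theorem \ref{Theorem:CCTreesMain}; the key point is that each $\psi_{i,t}$ is chosen as an isomorphism of maximal chains in the full coloured language $\langle <, I, C_0, \ldots, C_k, R^i_{(m,n)} \rangle$, which exists precisely by condition 2, so colours are automatically preserved in the extension.

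The only genuine work, and therefore the place I would expect to have to be careful, is in verifying the coloured analogue of Theorem \ref{Theorem:FinitelyManyChains}, since its proof uses the cancellation property $\tau_1 = \tau_1 {}^\wedge \sigma_1 {}^\wedge \sigma_0$ implies $\sigma_0 = \sigma_1 = \emptyset$ inside the normal-form calculus. I would need to confirm that the coloured normal form of Mwesigye--Truss has the same cancellation property; but since their normal form is derived from the same inductive construction with coloured singletons adjoined to the base, this is routine. Everything else in the proof is structural and transfers without change.
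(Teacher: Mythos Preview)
Your proposal is correct and follows exactly the paper's approach: the paper's own proof consists solely of the sentence ``The proof of Theorem \ref{Theorem:CCTreesMain} is easily adapted,'' and what you have written is precisely a careful unpacking of that adaptation. Your identification of the coloured version of Theorem \ref{Theorem:FinitelyManyChains} as the one place requiring a moment's thought is apt, but as you note the Mwesigye--Truss normal form handles this routinely.
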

\begin{proof}
The proof of Theorem \ref{Theorem:CCTreesMain} is easily adapted.
\end{proof}

\section{Cycle-free Partial Orders}

The aim of this section is to extend the above result to the cycle-free partial orders.  We shall give the definition of CFPO used in $\cite{Warren1997}$, after developing some notions analogous to those introduced at the start of this paper for trees.

\begin{dfn}
If $x,y$ are elements of a partial order $M$ then the \textbf{join} of $x$ and $y$ is written and defined as:
$$x \vee y := \mathrm{inf} \lbrace t \in T \: : \: t \leq x,y \rbrace $$ 
\end{dfn}

\begin{definition}
A partial order $M$ is said to be \textbf{path complete} if it for all $x,y \in M$:
\begin{enumerate}
\item if there exists a $z \in M$ such that $z \leq x,y$ then $x \wedge y \in T$; and
\item if there exists a $z \in M$ such that $z \geq x,y$ then $x \vee y \in T$.
\end{enumerate}

The \textbf{path completion} of a partial order $M$ is the intersection of all path complete partial orders $N$ such that $M \subseteq N$.  It is written as $M^+$.

The elements of $M^+ \setminus M$ are called \textbf{irrational}.
\end{definition}

\begin{dfn}[2.3.2 of \cite{Warren1997}]\label{dfn:connectingset}
If $M$ is a partial order and $a,b \in M$, then the n-tuple $C = \langle c_1,c_2, \ldots ,c_n \rangle$ (for $n \geq 2$) is said to be a \textbf{connecting set} from $a$ to $b$ in $M$, written $C \in C^M \langle a, b \rangle$, if the following hold:
\begin{enumerate}
\item $c_1=a, c_n=b, c_2, \ldots ,c_{n-1} \in M^+$;
\item if $j \not= i+1,i-1$ then $c_i \not\leq c_j$ and $c_j \not\leq c_i$; and
\item if $1<i<n$, then $c_{i-1} < c_i > c_{i+1}$ or $c_{i-1} > c_i < c_{i+1}$.
\end{enumerate}
\end{dfn}

\begin{dfn}[2.3.3 of \cite{Warren1997}]
Let $M$ be a partial order, $a,b \in M$, and let $C = \langle c_1,c_2,\ldots ,c_n \rangle$ be a connecting set from $a$ to $b$ in $M$.  Let $\sigma_k$ (for $ 1< k < n$) be maximal chains in $M^+$ with endpoints $c_k,c_{k+1} \in \sigma_k$, such that if $x \in \sigma_i \cap \sigma_j$ for some $i < j$, then $j = i+1$ and $x = c_{i+1}$.  Then we say that $P = \bigcup_{0<k<n} \sigma_k$ is a \textbf{path} from $a$ to $b$ in $M$.
\end{dfn}

\begin{dfn}
A partial order $M$ is said to be a \textbf{cycle-free partial order} (CFPO) if for all $x,y \in M$ there is at most one path between $x$ and $y$ in $M^+$.  If it exists, this unique path is denoted by $\path{x,y}$.
\end{dfn}

\begin{dfn}
$\alt{}$ is the partial order with the domain $ \lbrace a_i \; : \; i \in \mathbb{Z} \rbrace $ ordered by:
\begin{itemize}
\item if $i$ is odd then $ a_{i-1} > a_{i} < a_{i+1} $; and
\item if $i$ is even then $ a_{i-1} < a_{i} > a_{i+1} $.
\end{itemize}
$\alt{n}$ is defined to be $\alt{}$ restricted to $\lbrace a_0, \ldots a_{n-1} \rbrace$.  Note that flipping the order does not affect the definition of $\alt{}$, but does affect $\alt{n}$.  We will write $\alt{n}^*$ for the reverse ordering of $\alt{n}$.
\end{dfn}

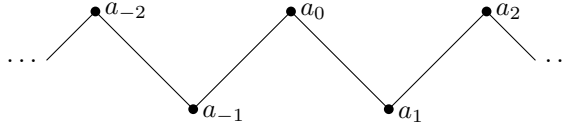
\begin{figure}[!ht]
\begin{center}
\begin{tikzpicture}[scale=0.13]
\draw (-25,0) -- (-20,5) -- (-10,-5) -- (0,5) -- (10,-5) -- (20,5) -- (25,0);

\fill (-20,5) circle (0.5);
\fill (-10,-5) circle (0.5);
\fill (0,5) circle (0.5);
\fill (20,5) circle (0.5);
\fill (10,-5) circle (0.5);

\draw[anchor=west] (-20,5) node {$a_{-2}$};
\draw[anchor=west] (-10,-5.5) node {$a_{-1}$};
\draw[anchor=west] (0,5) node {$a_{0}$};
\draw[anchor=west] (10,-5.5) node {$a_{1}$};
\draw[anchor=west] (20,5) node {$a_{2}$};

\draw[anchor=east] (-25,0) node {$\ldots$};
\draw[anchor=west] (25,0) node {$\ldots$};

\end{tikzpicture}
\end{center}
\caption{The Alternating Chain}
\end{figure}

That $\alt{}$ is a CFPO is readily apparent.

\begin{prop}\label{prop:altnotCC}
Let $M$ be a CFPO.  If $\alt{n} \subseteq M$ for all $n \in \mathbb{N}$ then $M$ is not $\aleph_0$-categorical.
\end{prop}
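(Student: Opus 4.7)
The strategy is to show that $M$ has infinitely many $2$-orbits, which by Ryll-Nardzewski (the theorem stated in the introduction) will rule out $\aleph_0$-categoricity. For each $n \geq 2$, we have a copy of $\alt{n}$ in $M$ with vertices $a_0(n), a_1(n), \ldots, a_{n-1}(n)$; the plan is to show that the pairs $(a_0(n), a_{n-1}(n))$ lie in pairwise distinct $\aut(M)$-orbits as $n$ varies.

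To extract an invariant of pairs $(x,y) \in M^2$, I would first observe that any automorphism of $M$ extends uniquely to an automorphism of the path completion $M^+$, because $M^+$ is canonically built from $M$ by iteratively adjoining meets and joins of bounded pairs. Next, for a pair $(x,y)$ connected by a path in $M^+$, cycle-freeness gives a unique path $\path{x,y}$, and this path has a uniquely determined ``spine'' of alternating peaks and valleys, namely the unique connecting set $\langle c_1, \ldots, c_m \rangle$ from $x$ to $y$. Setting $\ell(x,y) := m$, we get an $\aut(M)$-invariant of pairs: an extended automorphism $\hat\phi$ maps paths to paths, so by uniqueness $\hat\phi(\path{x,y}) = \path{\phi(x),\phi(y)}$, and in particular the associated connecting sets correspond, so their lengths agree.

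Finally, I would verify directly from Definition \ref{dfn:connectingset} that $\langle a_0(n), a_1(n), \ldots, a_{n-1}(n) \rangle$ satisfies the three clauses of being a connecting set in $M^+$ (the second clause, that non-adjacent entries are incomparable, is exactly the content of $\alt{n}$ being a zigzag), and that maximal chains $\sigma_k \subseteq M^+$ through each comparable pair $(a_k(n), a_{k+1}(n))$ can be chosen meeting only at the $a_k(n)$, producing an actual path $P_n$ in $M^+$ from $a_0(n)$ to $a_{n-1}(n)$. By the CFPO property $P_n$ is the unique such path, so $\ell(a_0(n), a_{n-1}(n)) = n$. These values are all distinct, so the pairs $(a_0(n), a_{n-1}(n))$ witness infinitely many $2$-orbits, giving the desired conclusion.

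The main obstacle, to my mind, is justifying that no shorter connecting set can exist between $a_0(n)$ and $a_{n-1}(n)$: a priori one might worry that $M^+$ introduces extra comparabilities that provide a shortcut. The point to emphasize is that any such alternative connecting set would give rise to a genuinely different path in $M^+$, directly contradicting the cycle-free hypothesis. Once this is flagged, the argument reduces to routine bookkeeping on connecting sets, and the invariance of $\ell$ under automorphisms together with Ryll-Nardzewski finishes the proof.
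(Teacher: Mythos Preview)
Your proposal is correct and follows essentially the same approach as the paper, which dispatches the proposition in a single line: ``Paths are preserved by automorphisms, so pairs joined by different length paths must lie in different 2-orbits.'' You have simply unpacked this sentence in full detail---extending automorphisms to $M^+$, identifying the length of the connecting set as the relevant invariant, and verifying that the embedded copies of $\alt{n}$ realise all values of that invariant.
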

\begin{proof}
Paths are preserved by automorphisms, so pairs joined by different length paths must lie in different 2-orbits.
\end{proof}

\begin{dfn}
We say that $M$, a CFPO, is a $\cfpo{n}$ if $M$ embeds $\alt{n}$ or $\alt{n}^*$, but not $\alt{n+1}$ or $\alt{n+1}^*$.
\end{dfn}

We may therefore restrict our attention to the $\cfpo{n}$s.  However, it was shown in \cite{BarhamTreelike} that if $M$ is a $\cfpo{n}$ then there is a coloured tree $T(M)$ such that $\aut(M) \cong \aut(T(M))$, where this isomorphism is as permutations groups (i.e. the orbits of $M$ and $T(M)$ are equal), so we conclude the following:

\begin{cor}
Let $M$ be a $\cfpo{n}$.  $M$ is $\aleph_0$-categorical if and only if $T(M)$ is as well.
\end{cor}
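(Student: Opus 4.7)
The plan is to derive the corollary as a short application of the Ryll--Nardzewski Theorem, using the result cited from \cite{BarhamTreelike} that $\aut(M) \cong \aut(T(M))$ not merely as abstract groups but as permutation groups. First I would recall that for any countable structure $N$, $\aleph_0$-categoricity is equivalent to $\aut(N)$ being oligomorphic, i.e.\ acting with finitely many orbits on $N^n$ for each $n \in \mathbb{N}$. This is exactly the Ryll--Nardzewski Theorem invoked in the paper's reformulation for trees, but the general statement applies verbatim to any countable $L$-structure, in particular to the $\cfpo{n}$ $M$ and to the coloured tree $T(M)$ (which is countable because it is built from the countable $M$).

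Next, I would exploit the parenthetical clarification in the cited result: because the group isomorphism $\aut(M) \cong \aut(T(M))$ is an isomorphism of permutation groups, it induces, for each $n \geq 1$, a bijection between the orbits of $\aut(M)$ on $M^n$ and the orbits of $\aut(T(M))$ on $T(M)^n$. Hence the former action has finitely many $n$-orbits if and only if the latter does. Chaining the equivalences yields
\[
M \text{ is } \aleph_0\text{-categorical} \iff \aut(M) \text{ is oligomorphic} \iff \aut(T(M)) \text{ is oligomorphic} \iff T(M) \text{ is } \aleph_0\text{-categorical},
\]
which is the claim.

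There is no genuine obstacle at this stage, since the only non-formal input is the construction of $T(M)$ and the verification that it realises $\aut(M)$ as a permutation group, both of which are handed to us by \cite{BarhamTreelike}. If one wished to be slightly more careful, the only point worth stressing in the write-up is that the isomorphism referenced is of permutation groups rather than of abstract groups; once this is in place, the argument reduces to a direct application of Ryll--Nardzewski and needs no further content from the body of this paper.
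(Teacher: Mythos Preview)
Your proposal is correct and matches the paper's own argument: the paper simply cites the permutation-group isomorphism $\aut(M)\cong\aut(T(M))$ from \cite{BarhamTreelike} and then invokes Ryll--Nardzewski, exactly as you do. There is nothing to add; the paper does not even write out a separate proof for the corollary.
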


\bibliography{bib.bib}{}
\bibliographystyle{plain}

\end{document}